\newtheorem{thm}{Theorem}[section]
\newtheorem{prop}{Proposition}[section]
\newtheorem{lem}{Lemma}[section]
\title{The radial curvature of an end that makes eigenvalues vanish in the essential spectrum II}
\author{Hironori Kumura}
\date{}
\begin{document}

\maketitle

\begin{abstract}
Under the quadratic-decay-conditions of the radial curvatures of an end, we shall derive growth estimates of solutions to the eigenvalue equation and show the absence of eigenvalues. 
\end{abstract}
%%
%%%%%%    INTRODUCTION    %%%%%%
%%
\section{Introduction}

The Laplace-Beltrami operator $\Delta$ on a noncompact complete Riemannian manifold $M$ is essentially self-adjoit on $C^{\infty}_0(M)$ and its self-adjoit extension to $L^2(M)$ has been studied by several authors from various points of view. 
Especially, the problem of the absence of eigenvalues was 
discussed in $[2,3,4,5,6,7,9,10,13,15,$\linebreak
$17, 19]$. 
The purpose of this paper is to clarify the relationship between the curvatures at infinity and the spectral structure of the Laplacian. 
In particular, this paper discusses growth estimates of solutions to the eigenvalue equation and show the absence of eigenvalues of the Laplacian. 

Let us now look at the previous works which treat the case the curvature $K$ tend to zero at infinity. 
They mainly studied the case that $(M,g)$ is simply connected and complete and that $K$ is nonpositive. 
We shall recall decay conditions on $K$ which ensure the absence of positive eigenvalues. 
In this case, the earlier works mainly studied the case that $\mathrm{dim~} M = 2$, because their arguments require faster than quadratic decay for $K$ which, in dimension greater than two, would force $(M,g)$ to be isometric with ${\bf R}^n$ (see Green and Wu \cite{G-W}). 
That is why this problem for higher dimensions remains a challenge so far. 
For example, it was assumed in \cite{D3} that $\int_1^{\infty} r^{\beta_1} |K| \,dr < \infty$ and $\lim_{r\to \infty} r^{\beta_2} K = 0$, where $\beta_1\ge 2$ and $\beta_2\ge 3$ are constants. 
Roughly speaking, this curvature condition is $K = O(r^{-3-\varepsilon})$. 
In this paper, we shall treat manifolds of all dimensions under the assumption of some quadratic decay for the curvature, and prove the absence of positive eigenvalues. 
We note here that Escobar and Freire \cite{E-F} studied the nonnegative curvature case. 
However, their arguments require {\it global} curvature conditions on $M$. 

We shall state our results more precisely. 
Let $(M,g)$ be an $n$-dimensional noncompact complete Riemannian manifold and $U$ an open subset of $M$. 
We shall say that $M-U$ is an {\it end with radial coordinates} if and only if the boundary $\partial U$ is compact, connected, and $C^{\infty}$ and the outward normal exponential map $\exp_{\partial U}^{\perp} : N^{+}(\partial U) \to M - \overline{U}$ induces a diffeomorphism, where $N^{+}(\partial U) = \{ v\in T(\partial U) \mid v {\rm ~is~outward~normal~to~}\partial N \}$. 
Note that $U$ is not necessarily relatively compact. 
Let $r$ denote the distance function from $\partial U$ defined on the end $M-U$. 
We shall say that a $2$-plane $\pi \subset T_xM$ $(x\in M-U)$ is {\it radial} if $\pi$ contains $\nabla r$, and, by the {\it radial curvature}, we mean the restriction of the sectional curvature to all the radial planes. 
In the sequel, the following notations will be used:
\begin{align*}
   & B(s,t) = \{ x\in M - \overline{U} \mid s < r(x) < t \} \quad 
              \mathrm{for}~~0 \le s < t ;\\
   & B(s,\infty) = \{ x\in M - \overline{U} \mid s<r(x) \} \quad 
              \mathrm{for}~~0 \le s <\infty;\\
   & S(t) = \{ x\in M - \overline{U} \mid r(x)=t \} \quad 
              \mathrm{for}~~0 \le t < \infty ;\\
   & \sigma(-\Delta) =  \,{\rm the~spectrum~of}~-\Delta;\\
   & \sigma_{\rm p}(-\Delta) =  \,{\rm the~set~of~all~eigenvalues~of}~
     - \Delta;\\
   & \sigma_{\rm ess}(-\Delta) = \,{\rm the~essential~spectrum~of}~- \Delta;\\
   & K_{{\rm rad.}} =  \,{\rm the~radial~curvature~on~}M-U.
\end{align*}
Moreover, we denote the Riemannian measure of $(M,g)$ by $dv_g$, and the induced measures from $dv_g$ on each $S(t)~( t > 0 )$ simply by $dA$. 

We shall consider the eigenvalue equation 
\begin{align*}
     \Delta u + \lambda u = 0 \qquad {\rm on}~~E = M - \overline{U}
\end{align*}
and drive a growth estimate at infinity of solutions $u$, from which will follow the absence of eigenvalues; 
%%
%%%%%%%    Theorem 1.1     %%%%%%%
%%
\begin{thm}
Let $(M,g)$ be an $n$-dimensional complete Riemannian manifold and $U$ an open subset of $M$. 
Assume that $E:= M - \overline{U}$ is an end with radial coordinates and set $r = {\rm dist }(U,*)$. 
We assume that there exists constants $r_0 >0$ such that the second fundamental forms $\nabla dr$ of the level hypersurfaces $\{S(r)\}_{r\ge r_0}$ satisfies 
\begin{align}
   \frac{a}{r} ( g - dr \otimes dr ) 
   \le \nabla dr \le 
   \frac{b}{r} ( g - dr \otimes dr ) 
   \qquad {\rm on}~~B(r_0,\infty), \tag{$*_1$} 
\end{align}
where $a >0$ and $b>0$ are constants satisfying
\begin{align}
   a \le b \quad {\rm and} \quad \frac{n+1}{n-1}a > b . \tag{$*_2$}
\end{align}
In addition, we assume that
\begin{align}
   {\rm Ric}\, (\nabla r, \nabla r) \ge - (n-1)\frac{b_1(r)}{r}
   \qquad {\rm on}~~B(r_0,\infty), \tag{$*_3$}
\end{align}
where $ b_1 (t) $ is a positive-valued decreasing function of $t \in [r_0,\infty)$ satisfying $\lim_{t \to \infty} b_1 (t) = 0$. 
Let $\lambda > 0$ be a constant and $u$ a nontrivial solution to the equation
\begin{align*}
     \Delta u + \lambda u = 0 \qquad {\rm on}~~B(r_0,\infty).
\end{align*}
Then, for any 
\begin{align}
   \gamma > \frac{n-1}{2} ( b - a ), \tag{$*_4$}
\end{align}
we have
\begin{align}
   \liminf_{t \to \infty}~t^{\gamma} \int_{S(t)} 
   \left\{ \left( \frac{\partial u}{\partial r} \right)^2 + |u|^2 \right\}
   \,dA \neq 0 .
\end{align}
\end{thm}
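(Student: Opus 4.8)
The plan is to combine two first-order integral identities for the level-set energies with the curvature hypotheses, and then close a differential-inequality (Gronwall) argument. Throughout I would decompose $\nabla u = u_r\,\nabla r + \nabla^{S} u$ into its radial and tangential parts along the level sets $S(t)$, and set
\[
D(t)=\int_{S(t)}u_r^2\,dA,\quad T(t)=\int_{S(t)}|\nabla^{S}u|^2\,dA,\quad P(t)=\int_{S(t)}u^2\,dA,\quad A(t)=\int_{S(t)}u\,u_r\,dA .
\]
Since $\lambda>0$ is fixed, the quantity to be bounded below, $\int_{S(t)}(u_r^2+u^2)\,dA$, is comparable to $E(t):=D(t)+\lambda P(t)$, so it suffices to prevent $t^{\gamma}E(t)$ from tending to $0$.

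First I would record the two basic identities. Writing $\Delta u = u_{rr}+H u_r+\Delta_{S(t)}u$ with $H=\Delta r$ the mean curvature, multiplying $\Delta u+\lambda u=0$ by $u$ and integrating over $S(t)$ (using $\int_{S(t)}(-u\,\Delta_{S(t)}u)=T$) gives $A'(t)=D(t)+T(t)-\lambda P(t)$. Next, the Rellich--Pohozaev identity with the radial field $X=f(r)\nabla r$, differentiated in $t$, yields
\[
\Phi_f'(t)=\int_{S(t)}\Big[\tfrac12(f'-fH)u_r^2+f\,\nabla dr(\nabla^{S}u,\nabla^{S}u)-\tfrac12(f'+fH)|\nabla^{S}u|^2+\tfrac{\lambda}{2}(f'+fH)u^2\Big]dA,
\]
where $\Phi_f(t)=\tfrac12\int_{S(t)}f\,(u_r^2-|\nabla^{S}u|^2+\lambda u^2)\,dA$. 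Here the commutator of $\partial_r$ with $\nabla^{S}$ produces the Hessian $\nabla dr$, which is exactly the object controlled by $(*_1)$.

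Then I would feed in the hypotheses. Condition $(*_1)$ pins $H$ between $(n-1)a/r$ and $(n-1)b/r$, and $\nabla dr(\nabla^{S}u,\nabla^{S}u)$ between $\tfrac ar|\nabla^{S}u|^2$ and $\tfrac br|\nabla^{S}u|^2$. Inserting these bounds, with the weight chosen as a power $f=r^{c}$ and optimised, leaves an indefinite quadratic form in $(u_r,|\nabla^{S}u|,u)$; the algebraic inequality $(*_2)$, $\tfrac{n+1}{n-1}a>b$, is precisely the discriminant condition that makes the relevant combination nonnegative. The radial-Ricci bound $(*_3)$ enters through the radial part of a Bochner-type term (equivalently the trace Riccati equation $H'+|\nabla dr|^2+\mathrm{Ric}(\nabla r,\nabla r)=0$): it controls the remainder generated when the mean curvature is differentiated, and because $b_1(r)\to0$ this remainder is an $o(1/t)$ perturbation that does not move the threshold. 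Eliminating the tangential energy $T$ by combining the weighted flux identity with $A'=D+T-\lambda P$, I expect to reach an inequality of the form $\tfrac{d}{dt}\log E(t)\ge -\dfrac{(n-1)(b-a)+o(1)}{t}$ for a suitably weighted version of $E$.

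Finally, for the conclusion I would argue by contradiction: suppose $\liminf_{t\to\infty}t^{\gamma}(D+P)=0$ for some $\gamma>\tfrac{n-1}{2}(b-a)$ and pick $t_k\to\infty$ realising the limit. Integrating the differential inequality shows that a weighted flux is eventually monotone, hence convergent; the rapid decay along $t_k$ forces its limit to vanish, and monotonicity then forces $E\equiv0$ on a half-line $[t_1,\infty)$, i.e. $u\equiv0$ on $B(t_1,\infty)$, whence $u\equiv0$ on $B(r_0,\infty)$ by unique continuation for $\Delta u+\lambda u=0$, contradicting nontriviality. I expect the main obstacle to be the \emph{sharp} constant: the one-sided estimate above loses a factor of two, and recovering the exact exponent $\tfrac{n-1}{2}(b-a)$ requires tracking $E$ together with its conjugate virial quantity $A$ and exploiting the oscillation of $u$ (the on-average equipartition of $D$ and $\lambda P$) rather than the crude bound alone. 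Securing the sign of the quadratic form through $(*_2)$ and absorbing the Ricci/Riccati remainder through $(*_3)$ are the remaining delicate points.
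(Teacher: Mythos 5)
Your first-stage setup is essentially the one the paper uses: the weighted Rellich--Pohozaev identity along level sets (the paper's Proposition 3.2), the auxiliary identity coming from integrating $u\,(\Delta u+\lambda u)$ (Lemma 3.1), the use of $(*_1)$ to pin down $\Delta r$ and $\nabla dr(\nabla^{S}u,\nabla^{S}u)$, of $(*_2)$ to make the resulting quadratic form in $(u_r,|\nabla^{S}u|,u)$ nonnegative, and of the Riccati identity $-\partial_r(\Delta r)=|\nabla dr|^2+\mathrm{Ric}(\nabla r,\nabla r)$ to absorb $(*_3)$. The contradiction framework is also the same. The genuine gap is in your endgame. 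The quantity that your argument makes monotone is a flux of the form $\Phi_f(t)=\tfrac12\int_{S(t)}f\,(u_r^2-|\nabla^{S}u|^2+\lambda u^2)\,dA$, which is \emph{indefinite} because of the $-|\nabla^{S}u|^2$ term; monotonicity plus vanishing along a subsequence only yields $\Phi_f\le 0$ on a half-line, and that does not force $E\equiv 0$. The paper cannot shortcut this either: assuming the liminf vanishes, it must run a three-stage bootstrap --- first $\int_{B(r_0,\infty)}r^m(|u|^2+|\nabla u|^2)\,dv_g<\infty$ for \emph{every} $m>0$ by repeatedly integrating the monotonicity inequality in the radial variable and invoking Green's formula (Proposition 4.1); then exponential decay $\int e^{\eta r}(|u|^2+|\nabla u|^2)\,dv_g<\infty$ by substituting $v=r^m u$ and closing a Gronwall inequality for $F(x)=x\int_{S(x)}|u|^2\,dA$ (Proposition 5.1); and finally the substitution $v=e^{kr^\theta}u$ with $k\to\infty$, which forces $\int_{S(s)}|u|^2\,dA=0$ near infinity (Proposition 6.1), after which unique continuation finishes. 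This escalation of weights is the heart of the Kato--Eidus--Roze--Mochizuki method that the paper adapts, and it is entirely absent from your plan; without it the argument stalls at a sign-indefinite monotone quantity.

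A second, independent gap is the sharp exponent. You concede that your one-sided inequality for $\tfrac{d}{dt}\log E$ only reaches the threshold $(n-1)(b-a)$ and that recovering $\tfrac{n-1}{2}(b-a)$ would require some additional equipartition or virial argument which you do not supply; but the halved constant is precisely the content of $(*_4)$. In the paper the factor $\tfrac12$ appears automatically: the hypothesis $\gamma>\tfrac{n-1}{2}(b-a)$ is used only to choose the free parameter $\varepsilon$ in Proposition 3.2 so that both $2a-\widehat{b}-\varepsilon\ge 0$ (coefficient of the tangential energy) and $2\gamma-\widehat{b}-\varepsilon>0$ (coefficient of $u_r^2$) hold simultaneously, which is possible exactly because $(*_2)$ makes the interval $(-\widehat{a},\,2a-\widehat{b})$ nonempty. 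As written, your proposal would at best prove the statement for $\gamma>(n-1)(b-a)$, not for the claimed range.
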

%%
%%%%%%%    Theorem 1.2     %%%%%%%
%%
\begin{thm}
Let $(M,g)$ be an $n$-dimensional complete Riemannian manifold and $U$ an open subset of $M$. 
Assume that $E:= M - \overline{U}$ is an end with radial coordinates and set $r = {\rm dist }(U,*)$. 
We assume that there exists constants $r_0 >0$ such that the second fundamental forms $\nabla dr$ of the level hypersurfaces $\{S(r)\}_{r\ge r_0}$ satisfies 
\begin{align}
   \frac{a}{r} ( g - dr \otimes dr ) 
   \le \nabla dr \le 
   \frac{b}{r} ( g - dr \otimes dr ) 
   \qquad {\rm on}~~B(r_0,\infty), \tag{$*_1$} 
\end{align}
where $a >0$ and $b>0$ are constants satisfying
\begin{align}
   a \le b \quad {\rm and} \quad \frac{n+1}{n-1}a > b . \tag{$*_2$}
\end{align}
In addition, we assume that
\begin{align}
   {\rm Ric}\, (\nabla r, \nabla r) \ge - (n-1)\frac{b_1(r)}{r}
   \qquad {\rm on}~~B(r_0,\infty), \tag{$*_3$}
\end{align}
where $ b_1 (t) $ is a positive-valued decreasing function of $t \in [r_0,\infty)$ satisfying $\lim_{t \to \infty} b_1 (t) = 0$. 
Furthermore, if we add the assumption
\begin{align}
  1 > \frac{n-1}{2} ( b - a )   \tag{$*_5$},
\end{align}
to ones above, then $\sigma (-\Delta) = [0,\infty)$ and $\sigma _{{\rm p}}( -\Delta ) = \emptyset$. 
\end{thm}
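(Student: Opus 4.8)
The operator $-\Delta$ is nonnegative and self-adjoint, so $\sigma(-\Delta)\subseteq[0,\infty)$ and in particular there are no negative eigenvalues; the plan is to rule out all eigenvalues first and then to show that every $\lambda\ge 0$ lies in the essential spectrum. For the absence of positive eigenvalues, suppose $\lambda>0$ is an eigenvalue with eigenfunction $u\in L^2(M)$. By elliptic regularity $u$ is smooth, and multiplying $\Delta u+\lambda u=0$ by $u\chi^2$ for suitable cutoffs $\chi$ and integrating by parts yields $\nabla u\in L^2$, so that $\int_{B(r_0,\infty)}\{(\partial u/\partial r)^2+|u|^2\}\,dv_g<\infty$. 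Since $r$ is a distance function, $|\nabla r|=1$, and the coarea formula gives
\[
   \int_{r_0}^{\infty}\Bigl(\int_{S(t)}\{(\partial u/\partial r)^2+|u|^2\}\,dA\Bigr)\,dt<\infty .
\]
Now assumption $(*_5)$ makes the interval $\bigl(\tfrac{n-1}{2}(b-a),\,1\bigr)$ nonempty, so I may fix a $\gamma$ in it; then $(*_4)$ holds and Theorem 1.1 applies to the nontrivial solution $u$, producing a constant $c>0$ and $T\ge r_0$ with $\int_{S(t)}\{(\partial u/\partial r)^2+|u|^2\}\,dA\ge c\,t^{-\gamma}$ for $t\ge T$. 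Because $\gamma<1$ we have $\int_T^{\infty}c\,t^{-\gamma}\,dt=\infty$, contradicting the finiteness above. Hence no positive eigenvalue exists.

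For the zero eigenvalue, a harmonic $u\in L^2(M)$ would, by the same cutoff integration by parts, satisfy $\int_M|\nabla u|^2\,dv_g=0$, so $u$ is constant; since $(*_1)$ with $a>0$ forces the level sets to expand and hence the end to have infinite volume, the only $L^2$ constant is $0$. Together with $\sigma(-\Delta)\subseteq[0,\infty)$ this gives $\sigma_{\mathrm p}(-\Delta)=\emptyset$.

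It remains to prove $[0,\infty)\subseteq\sigma_{\mathrm{ess}}(-\Delta)$, for which I would construct a singular (Weyl) sequence for each $\lambda\ge 0$. Writing $\Delta=\partial_r^2+(\Delta r)\,\partial_r+\Delta_{S(r)}$ and noting that taking the trace in $(*_1)$ gives $\tfrac{(n-1)a}{r}\le\Delta r\le\tfrac{(n-1)b}{r}$, I would test with radial functions $\phi_k(x)=t^{-c/2}e^{i\sqrt{\lambda}\,t}\psi_k(t)$, $t=r(x)$, where $c$ is a constant chosen between $(n-1)a$ and $(n-1)b$ and $\psi_k$ is a smooth cutoff supported on a dyadic annulus $\{t_k<r<2t_k\}$ with $t_k\to\infty$ (for $\lambda=0$ a purely polynomial profile replaces the oscillatory one). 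A direct computation shows that, for a radial $\phi_k$, the spherical term $\Delta_{S(r)}\phi_k$ vanishes identically, while $(\Delta+\lambda)\phi_k$ equals a radial-mismatch term $(\Delta r-c/r)\,\partial_r\phi_k$, a lower-order term, and cutoff terms supported where $\psi_k'\neq0$. I would then verify $\|(\Delta+\lambda)\phi_k\|_{L^2}/\|\phi_k\|_{L^2}\to 0$, which places $\lambda$ in $\sigma_{\mathrm{ess}}(-\Delta)\subseteq\sigma(-\Delta)$, yielding $\sigma(-\Delta)=[0,\infty)$.

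The main obstacle is this last construction. Because $\Delta r$ is only pinched between $\tfrac{(n-1)a}{r}$ and $\tfrac{(n-1)b}{r}$, no single exponent $c$ annihilates the radial operator exactly; the point is that on the annulus $\{t_k<r<2t_k\}$ the mismatch term carries a factor $1/r=O(1/t_k)$ relative to $\phi_k$, the remaining radial error is $O(1/t_k^2)$, and since $|\psi_k'|\sim 1/t_k$ the cutoff contribution is $O(1/t_k)$ relative as well, so all errors vanish as $t_k\to\infty$ for every $a,b$. The delicate book-keeping lies in estimating the normalization $\|\phi_k\|_{L^2}^2=\int_{t_k}^{2t_k}t^{-c}|\psi_k|^2\,\mathrm{Vol}(S(t))\,dt$ from below: this requires the two-sided control of the area element across level sets supplied by integrating $(*_1)$ (which bounds $\mathrm{Vol}(S(t))$ between powers $t^{(n-1)a}$ and $t^{(n-1)b}$), with the Ricci bound $(*_3)$ ensuring these comparisons remain uniform as $t_k\to\infty$, so that the relative errors are genuinely $o(1)$.
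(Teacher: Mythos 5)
Your argument is correct, and for the central point -- ruling out positive eigenvalues -- it is exactly the paper's proof: an $L^2$ eigenfunction restricted to the end is a nontrivial solution of the eigenvalue equation, $(*_5)$ lets one choose $\gamma$ admissible in Theorem~1.1 (the paper simply takes $\gamma=1$, you take $\gamma\in(\tfrac{n-1}{2}(b-a),1)$, which changes nothing), and the resulting lower bound $\int_{S(t)}\{(\partial u/\partial r)^2+|u|^2\}\,dA\ge c\,t^{-\gamma}$ contradicts $u,\nabla u\in L^2$ after integrating in $t$ via the coarea formula. The two places where you diverge are peripheral but worth noting. First, for $\sigma(-\Delta)=[0,\infty)$ the paper observes that $(*_1)$ forces $\Delta r\to 0$ and then cites the author's earlier result [K1] that this implies $\sigma_{\mathrm{ess}}(-\Delta)=[0,\infty)$; you instead sketch a direct Weyl-sequence construction on dyadic annuli. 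Your construction is sound (the mismatch term $(\Delta r - c/r)\partial_r\phi_k$ and the cutoff terms are all $O(1/t_k)$ relative to $\|\phi_k\|$, and the two-sided bound $t^{(n-1)a}\lesssim \mathrm{Vol}(S(t))\lesssim t^{(n-1)b}$ from integrating the trace of $(*_1)$ keeps the normalization under control -- note $(*_3)$ is not actually needed there), and it makes the proof self-contained at the cost of some bookkeeping the paper avoids by citation. Second, you treat the eigenvalue $\lambda=0$ explicitly (Caccioppoli gives $\nabla u\equiv 0$, and infinite volume, which follows from $\Delta r\ge (n-1)a/r>0$, kills the constant), whereas the paper's written proof only addresses $\lambda>0$; your addition closes a small gap that the paper leaves implicit.
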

Theorem $1.1$, $1.2$ and comparison theorem in Riemannian geometry (Kasue \cite{Kasue}) yield the following:
%%
%%%%%%%    Theorem 1.3     %%%%%%%
%%
\begin{thm}
Let $(M,g)$ be an $n$-dimensional complete Riemannian manifold and $U$ an open subset of $M$. 
Assume that $E:= M - \overline{U}$ is an end with radial coordinates. 
We assume that there exists a constant $r_0 > 0$ such that 
\begin{align}
   & \nabla dr \bigl|_{TS(r_0)\times TS(r_0)}
     \begin{cases}
        \ge \displaystyle \frac{a}{r_0}  
        & \quad \text{if }~~0 < a < 1, \vspace{1mm} \\
        > 0
        & \quad \text{if }~~a \ge 1; 
     \end{cases}
     \tag{$*_6$} \vspace{2mm} \\
   & \nabla dr \bigl|_{TS(r_0)\times TS(r_0)} 
     \le \frac{b}{r_0} \hspace{10mm} \text{if }~~0 < b < 1 ;\tag{$*_7$} \\
   & - \frac{b(b-1)}{r^2}\le K_{{\rm rad.}} \le \frac{a(1-a)}{r^2} \qquad 
     \mathrm{on}~B(r_0,\infty), \tag{$*_8$}
\end{align}
where $ a > 0$ and $ b > 0$ are constants satisfying 
\begin{align}
  a \le b \quad {\rm and} \quad \frac{n+1}{n-1}a > b. \tag{$*_2$} 
\end{align}
Let $\lambda>0$ be a constant and $u$ a nontrivial solution to 
\begin{align*}
     \Delta u + \lambda u = 0 \qquad {\rm on}~~B(r_0,\infty).
\end{align*}
Then, we have 
\begin{align*}
     \liminf_{t\to \infty} ~t^{\gamma} \int_{S(t)}
     \left\{ \left( \frac{\partial u}{\partial r} \right)^2 + |u|^2 
     \right\} \,dA \neq 0 
     \quad {\rm for~any}~~\gamma > \frac{n-1}{2} ( b - a ) .
\end{align*}
\end{thm}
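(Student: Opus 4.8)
\emph{Proof proposal.}

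The plan is to deduce the hypotheses $(*_1)$ and $(*_3)$ of Theorem $1.1$ from the radial-curvature conditions $(*_6)$–$(*_8)$ by the Riccati comparison technique encapsulated in Kasue's comparison theorem \cite{Kasue}, and then to invoke Theorem $1.1$; the remaining hypothesis $(*_2)$ is assumed verbatim in both statements and needs nothing further.

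First I would recall that along each unit-speed radial geodesic every principal curvature $\kappa(r)$ of the level hypersurfaces (an eigenvalue of $\nabla dr$ restricted to $TS(r)$) satisfies the scalar Riccati equation $\kappa' + \kappa^2 + K_{{\rm rad.}} = 0$, where $K_{{\rm rad.}}$ is the radial curvature in the corresponding plane. The function $c/r$ is the equilibrium solution of the model equation $\kappa' + \kappa^2 + \frac{c(1-c)}{r^2} = 0$; taking $c=a$ and $c=b$ produces exactly the two models occurring in $(*_8)$, since $\frac{a(1-a)}{r^2}$ and $-\frac{b(b-1)}{r^2} = \frac{b(1-b)}{r^2}$ are the model radial curvatures attached to $a/r$ and $b/r$ respectively. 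The heart of the matter is the standard comparison: setting $\phi = \kappa - a/r$, the upper bound $K_{{\rm rad.}} \le \frac{a(1-a)}{r^2}$ forces $(\mu\phi)' \ge 0$ for a positive integrating factor $\mu$, so that the initial inequality $(*_6)$ propagates to the lower Hessian bound in $(*_1)$; symmetrically, the lower bound $K_{{\rm rad.}} \ge -\frac{b(b-1)}{r^2}$ together with $(*_7)$ propagates to the upper Hessian bound in $(*_1)$. This is precisely what Kasue's comparison theorem \cite{Kasue} supplies, and I would cite it to obtain $(*_1)$ (valid on $B(r_1,\infty)$ for some $r_1 \ge r_0$ if only an eventual bound is available).

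Next I would check $(*_3)$. Tracing the lower bound in $(*_8)$ over an orthonormal frame $\{e_i\}_{i=1}^{n-1}$ of $TS(r)$ yields ${\rm Ric}(\nabla r, \nabla r) = \sum_{i=1}^{n-1} K_{{\rm rad.}}(\nabla r \wedge e_i) \ge -(n-1)\frac{b(b-1)}{r^2}$. If $b>1$ I would set $b_1(r) = \frac{b(b-1)}{r}$, which is positive, decreasing, and tends to $0$, so that $(*_3)$ holds; if $b \le 1$ the radial Ricci curvature is nonnegative and any positive decreasing function with $b_1(r) \to 0$, e.g. $b_1(r) = 1/r$, serves. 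Hence $(*_3)$ holds in every case.

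With $(*_1)$, $(*_2)$, and $(*_3)$ in force I would apply Theorem $1.1$ to $u$ on $B(r_1,\infty)$, obtaining $\liminf_{t\to\infty} t^{\gamma}\int_{S(t)}\{(\partial u/\partial r)^2 + |u|^2\}\,dA \ne 0$ for every $\gamma > \frac{n-1}{2}(b-a)$; enlarging $r_0$ to $r_1$ is harmless, since the conclusion is stated as a $\liminf$ at infinity. The step I expect to be the main obstacle is the rigorous passage through Kasue's comparison theorem, in particular reconciling the two regimes in $(*_6)$–$(*_7)$ — where for $a \ge 1$ or $b \ge 1$ the model curvature changes sign and only one-sided initial data is prescribed — with the hypotheses guaranteeing the clean pointwise bounds $(*_1)$. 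Once those Hessian bounds are established, the rest is immediate.
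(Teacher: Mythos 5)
Your proposal matches the paper's own proof: the paper obtains Theorem 1.3 from Theorem 1.1 precisely by invoking Kasue's comparison theorem (its Propositions 2.1 and 2.2) to convert $(*_6)$--$(*_8)$ into $(*_1)$ and $(*_3)$. The subtlety you flag in the $a\ge 1$ and $b\ge 1$ regimes --- where Proposition 2.2 only yields $\nabla dr$ comparable to $\alpha/(r-r_0+\alpha/\beta)$ rather than exactly $\alpha/r$, so one must slightly perturb $a,b$ and enlarge $r_0$, which is harmless because the inequalities in $(*_2)$ and in $\gamma>\frac{n-1}{2}(b-a)$ are strict --- is real, and the paper simply passes over it in silence.
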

%%%
%%
%%%%%%%    Theorem 1.4     %%%%%%%
%%
\begin{thm}
Let $(M,g)$ be an $n$-dimensional complete Riemannian manifold and $U$ an open subset of $M$. 
Assume that $E:= M - \overline{U}$ is an end with radial coordinates. 
We assume that there exists a constant $r_0 > 0$ such that 
\begin{align}
   & \nabla dr \bigl|_{TS(r_0)\times TS(r_0)}
     \begin{cases}
        \ge \displaystyle \frac{a}{r_0}  
        & \quad \text{if }~~0 < a < 1, \vspace{1mm} \\
        > 0
        & \quad \text{if }~~a \ge 1; 
     \end{cases}
     \tag{$*_6$} \vspace{2mm} \\
   & \nabla dr \bigl|_{TS(r_0)\times TS(r_0)} 
     \le \frac{b}{r_0} \hspace{10mm} \text{if }~~0 < b < 1 ;\tag{$*_7$} \\
   & - \frac{b(b-1)}{r^2}\le K_{{\rm rad.}} \le \frac{a(1-a)}{r^2} \qquad 
     \mathrm{on}~B(r_0,\infty), \tag{$*_8$}
\end{align}
where $ a > 0$ and $ b > 0$ are constants satisfying 
\begin{align}
  a \le b \quad {\rm and} \quad \frac{n+1}{n-1}a > b. \tag{$*_2$} 
\end{align}
Furthermore, if we add the assumption
\begin{align}
  1 > \frac{n-1}{2} ( b - a )   \tag{$*_5$},
\end{align}
to ones above, then $\sigma (-\Delta) = [0,\infty)$ and $\sigma _{{\rm p}}( -\Delta ) = \emptyset$. 
\end{thm}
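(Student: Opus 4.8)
The plan is to deduce Theorem 1.4 from Theorem 1.2. I would show that the radial-curvature pinching $(*_8)$ together with the boundary data $(*_6)$ and $(*_7)$ forces the second-fundamental-form pinching $(*_1)$ (after possibly enlarging $r_0$ and infinitesimally loosening the constants), and that $(*_8)$ also yields the Ricci bound $(*_3)$. Since $(*_2)$ and $(*_5)$ are hypotheses shared verbatim, Theorem 1.2 then applies and gives $\sigma(-\Delta)=[0,\infty)$ and $\sigma_{\mathrm p}(-\Delta)=\emptyset$.

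First I would set up the comparison. Along each unit-speed geodesic leaving $S(r_0)$ normally, the shape operator $S=\nabla dr|_{TS(r)}$ obeys the Riccati equation $S'+S^2+R_{\nabla r}=0$. The relevant model warping functions are $f_a=r^a$ and $f_b=r^b$, the positive solutions of $f''+\frac{a(1-a)}{r^2}f=0$ and $f''-\frac{b(b-1)}{r^2}f=0$, whose logarithmic derivatives are $a/r$ and $b/r$. The standard Riccati/Hessian comparison (Kasue) then gives: from $K_{\mathrm{rad}}\le\frac{a(1-a)}{r^2}$ and a lower bound on $\nabla dr|_{S(r_0)}$ one gets $\nabla dr\ge\frac{f'}{f}(g-dr\otimes dr)$, while from $K_{\mathrm{rad}}\ge-\frac{b(b-1)}{r^2}$ and an upper bound on $\nabla dr|_{S(r_0)}$ one gets $\nabla dr\le\frac{f'}{f}(g-dr\otimes dr)$. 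In the regimes $0<a<1$ and $0<b<1$ the conditions $(*_6)$, $(*_7)$ furnish exactly the initial values $a/r_0$, $b/r_0$, so the comparison functions are $f_a=(r/r_0)^a$, $f_b=(r/r_0)^b$ and one obtains the sharp two-sided bound $\frac{a}{r}\le\nabla dr\le\frac{b}{r}$ on all of $B(r_0,\infty)$, which is $(*_1)$.

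The hard part will be the remaining regimes $a\ge1$ and $b\ge1$, where $(*_6)$ supplies only $\nabla dr|_{S(r_0)}>0$ and $(*_7)$ imposes no upper constraint. Here I would run the comparison with the actual boundary eigenvalue $\mu_0>0$: the comparison function solving the model equation with $f(r_0)=1$, $f'(r_0)=\mu_0$ has the form $c_1r^a+c_2r^{1-a}$ (resp. $c_1r^b+c_2r^{1-b}$). The crucial input is that the end has radial coordinates, so $r$ is smooth with no focal points on all of $[r_0,\infty)$; if $c_1\le0$ the model $f$ would vanish at a finite $r_*$ and force $\nabla dr\to-\infty$ there, a focal point, so in fact $c_1>0$. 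Consequently $\frac{f'}{f}=\frac{a}{r}+O(r^{-2a})$ (resp. $\frac{b}{r}+O(r^{-2b})$), and since $2a>1$ (resp. $2b>1$) the leading term dominates, so for any $a'<a$ and any $b'>b$ one has $\frac{f'}{f}\ge\frac{a'}{r}$ and $\frac{f'}{f}\le\frac{b'}{r}$ once $r\ge r_1$ for a suitable $r_1$. Thus $(*_1)$ holds with $(a,b)$ replaced by $(a',b')$ on $B(r_1,\infty)$. Because the constraints $(*_2)$ ($a'\le b'$, $\frac{n+1}{n-1}a'>b'$) and $(*_5)$ ($1>\frac{n-1}{2}(b'-a')$) are open, they persist for $(a',b')$ chosen close enough to $(a,b)$, and I would fix such a pair.

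Finally, $(*_3)$ follows from $(*_8)$ alone: summing the radial lower bound over an orthonormal frame of $TS(r)$ gives $\mathrm{Ric}(\nabla r,\nabla r)=\sum_{i=1}^{n-1}K(\nabla r,e_i)\ge-(n-1)\frac{b(b-1)}{r^2}$, so $(*_3)$ holds with $b_1(t)=b(b-1)/t$ when $b>1$, and with $b_1(t)=1/t$ when $b\le1$ (where $\mathrm{Ric}(\nabla r,\nabla r)\ge0$); in either case $b_1$ is positive, decreasing, and tends to $0$. With $(*_1)$, $(*_2)$, $(*_3)$, $(*_5)$ all verified on $B(r_1,\infty)$, Theorem 1.2 applies—its spectral conclusion being insensitive to the value of $r_1$—and yields $\sigma(-\Delta)=[0,\infty)$ and $\sigma_{\mathrm p}(-\Delta)=\emptyset$. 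I expect the genuine work to lie in making the no-focal-point/leading-coefficient argument for $a,b\ge1$ precise and in checking that the tensorial Riccati comparison respects the two-sided pinching; the spectral statement itself is delegated entirely to Theorem 1.2.
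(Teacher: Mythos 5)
Your proposal follows the same route as the paper: Theorem 1.4 is deduced from Theorem 1.2 by converting $(*_6)$--$(*_8)$ into $(*_1)$ and $(*_3)$ via the Hessian comparison theorem (the paper simply invokes Propositions 2.1 and 2.2, i.e.\ Kasue's comparison, whose content is exactly your Riccati/no-focal-point argument). Your explicit perturbation to $(a',b')$ in the regimes $a\ge 1$ and $b\ge 1$, where the comparison only yields $\nabla dr$ comparable to $\alpha/(r-r_0+\alpha/\beta)$ rather than $\alpha/r$, supplies a detail the paper leaves implicit, and it is handled correctly since $(*_2)$ and $(*_5)$ are open conditions.
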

In Theorem $1.3$ and $1.4$, note that we do not assume an explicit upper bound of $\nabla dr \bigl|_{TS(r_0)\times TS(r_0)}$ if $ b \ge 1$. \vspace{2mm}

Our method is a modification of solutions of Koto \cite{Kato}, Eidus \cite{E}, Roze \cite{R}, and Mochizuki \cite{M} to the analogous problem for the Schr\"odinger equation on Euclidian space. 

\vspace{3mm}
The author would like to express his gratitude to Professor Minoru Murata. 
He kindly informed the author of several facts about the analogous results for the Schr\"odinger equation on Euclidian space. 

%%
%%%%%%   Section 2     %%%%%%
%%
\section{The second fundamental form and radial curvature}

In this section, we shall confirm our geometric situation. 
On an end with radial coordinates, the second fundamental forms $\nabla dr$ of the level hypersurfaces $\{ S(r) \}_{r\ge 0}$ describes the metric growth of the surfaces $\{ S(r) \}_{r\ge 0}$ and the radial curvatures controls the second fundamental forms $\nabla dr$; the comparison theorem in Riemannian geometry ( Kasue \cite{Kasue} ) yields the following propositions: 
%%
%%%%%%%%%%%　Proposition 2.1   %%%%%%%%%%%
%%
\begin{prop}
Let $(M,g)$ be an $n$-dimensional complete Riemannian manifold and $U$ an open subset of $M$. 
We assume that $E:= M - U$ has radial coordinates. 
Let $r_0 > 0$ and $\alpha$ be constants. 
We assume that
\begin{align*}
  \alpha \in (0,1).
\end{align*}
Then, the following holds$:$
\begin{enumerate}[$(1)$]
  \item If 
	\begin{align*}
      \begin{cases}
        \displaystyle \nabla dr 
        \ge \frac{\alpha}{r_0} \left( g - dr \otimes dr \right)  
        & \text{on}~~S(r_0), \vspace{2mm} \\ 
        \displaystyle K_{{\rm rad.}} \le \frac{\alpha(1-\alpha)}{r^2} 
        & \text{on }~~B(r_0, \infty),
      \end{cases}
	\end{align*}
    then we have
	\begin{align*}
	   \nabla dr & \ge \frac{\alpha}{r} \left( g - dr \otimes dr \right) 
	      \qquad {\rm on}~~B(r_0, \infty) .
	\end{align*}
  \item If 
     \begin{align*}
      \begin{cases}
        \displaystyle  \nabla dr 
        \le \frac{\alpha}{r_0} \left( g - dr \otimes dr \right)  
        & \text{on}~~S(r_0), \vspace{2mm} \\
        \displaystyle  K_{{\rm rad.}} \ge \frac{\alpha(1-\alpha)}{r^2} 
        & \text{on }~~B(r_0, \infty),
      \end{cases}
     \end{align*}
    then we have
     \begin{align*}
	   \nabla dr & \le \frac{\alpha}{r} \left( g - dr \otimes dr \right) 
	      \qquad {\rm on}~~B(r_0, \infty) .
     \end{align*}
\end{enumerate}
\end{prop}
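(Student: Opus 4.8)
The plan is to read Proposition 2.1 as a Riccati comparison for the shape operator of the level sets and to exploit the fact that the comparison model is a scalar multiple of the identity. Write $A := \nabla dr\big|_{TS(r)}$, regarded through $g$ as a symmetric endomorphism of $TS(r)$; along each unit-speed radial geodesic $\gamma$ with $\gamma'=\partial_r$ the operator $A$ is smooth on all of $B(r_0,\infty)$ (there are no cut or focal points, since $E$ has radial coordinates, so a pointwise inequality along every such geodesic yields the global tensor inequality), and it satisfies the Riccati equation
\[
   A' + A^2 + R_\gamma = 0, \qquad R_\gamma := R(\,\cdot\,,\partial_r)\partial_r .
\]
The hypothesis $K_{\mathrm{rad.}}\le \alpha(1-\alpha)/r^2$ says exactly that $R_\gamma \le \bar R$ as symmetric endomorphisms, where $\bar R := \tfrac{\alpha(1-\alpha)}{r^2}\,\mathrm{Id}$. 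A direct computation shows that the scalar operator $\bar A := \tfrac{\alpha}{r}\,\mathrm{Id}$ solves $\bar A' + \bar A^2 + \bar R = 0$, since $-\tfrac{\alpha}{r^2} + \tfrac{\alpha^2}{r^2} = -\tfrac{\alpha(1-\alpha)}{r^2}$; thus $\bar A$ is precisely the model against which $A$ is to be compared, and the desired conclusion of part $(1)$ is $A \ge \bar A$ on $B(r_0,\infty)$.

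First I would set $W := A - \bar A$, which is symmetric and satisfies $W(r_0)\ge 0$ by the boundary hypothesis on $S(r_0)$. Because $\bar A$ is a scalar multiple of the identity it commutes with $A$, so $A^2 - \bar A^2 = W^2 + 2\bar A W$, and subtracting the two Riccati equations gives the genuinely operator-valued but commutator-free equation
\[
   W' = -W^2 - \frac{2\alpha}{r}\,W + \bigl(\bar R - R_\gamma\bigr),
\]
in which the inhomogeneity $F := \bar R - R_\gamma$ is positive semidefinite. The key point is that working in a parallel orthonormal frame along $\gamma$ turns $W'$ into an ordinary derivative, so the problem reduces to a finite-dimensional comparison governed by the ODE above.

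Next I would run a first-time-of-failure (maximum principle) argument. Put $r_1 := \inf\{r>r_0 : \lambda_{\min}(W(r))<0\}$; at $r_1$ one has $\lambda_{\min}(W(r_1))=0$ with a unit eigenvector $v$, $W(r_1)v=0$, while $h(r):=\langle W v,v\rangle$ satisfies $h\ge 0$ on $[r_0,r_1)$ and $h(r_1)=0$, hence $h'(r_1)\le 0$. On the other hand, evaluating the equation for $W$ on $v$ and using $Wv=0$ kills both the $-W^2$ term ($|Wv|^2=0$) and the $-\tfrac{2\alpha}{r}W$ term, leaving $h'(r_1)=\langle F v,v\rangle\ge 0$. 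To convert this into a strict contradiction I would first prove the statement with the strict model, replacing the curvature bound by $\alpha(1-\alpha)/r^2-\varepsilon$ (equivalently perturbing $\bar R$ and $W(r_0)$ by $\varepsilon\,\mathrm{Id}$) so that $F>0$, obtain $W_\varepsilon>0$, and then let $\varepsilon\downarrow 0$. This handles the only delicate issue, namely the borderline equality case, which is exactly what the non-strict hypotheses force.

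Finally, part $(2)$ is the mirror image: with $W:=\bar A - A\ge0$ at $r_0$ and the reversed curvature bound $R_\gamma\ge\bar R$, the same subtraction yields $W' = W^2 - \tfrac{2\alpha}{r}W + (R_\gamma-\bar R)$, and the identical first-failure argument (again with the $\varepsilon$-perturbation) gives $W\ge0$, i.e. $A\le\bar A$; smoothness of $\nabla dr$ on the whole end guarantees the comparison solution never escapes before the argument closes. The main obstacle I anticipate is not the scalar ODE bookkeeping but the passage from scalar Sturm comparison to the operator-valued setting together with the non-strict borderline case; both are resolved by the commuting-model reduction and the $\varepsilon$-perturbation above, and the whole scheme may alternatively be quoted directly from Kasue's comparison theorem \cite{Kasue}, of which this is the stated special case.
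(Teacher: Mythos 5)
Your proposal is correct in substance, but it is worth noting that the paper itself contains no proof of this proposition at all: it simply defers to Kasue's comparison theorem \cite{Kasue}, as your last sentence anticipates. What you supply is therefore a self-contained argument where the paper has only a citation. Your route --- the matrix Riccati equation $A'+A^2+R_\gamma=0$ for the shape operator along radial geodesics, the observation that the model $\bar A=\tfrac{\alpha}{r}\,\mathrm{Id}$ solves the model equation with $\bar R=\tfrac{\alpha(1-\alpha)}{r^2}\,\mathrm{Id}$, the commutator-free equation for $W=A-\bar A$ obtained because $\bar A$ is scalar, and the first-failure/maximum-principle argument on $\lambda_{\min}(W)$ --- is exactly the standard Eschenburg--Heintze-style proof of the Hessian comparison theorem, specialized to the situation where the absence of focal points (radial coordinates) makes $A$ globally smooth, so nothing can go wrong before the comparison closes. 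This buys a reader a proof that does not require chasing the reference, at the cost of some ODE bookkeeping that Kasue's theorem packages away.

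One detail to tighten: in the borderline case you say you would ``replace the curvature bound by $\alpha(1-\alpha)/r^2-\varepsilon$,'' which, read literally, strengthens the hypothesis on the manifold and cannot be removed by letting $\varepsilon\downarrow 0$; moreover, for part $(1)$ the perturbation must go in the other direction. The correct implementation is the one hinted at in your parenthesis: perturb the \emph{model}, i.e.\ take $\bar R_\varepsilon=\bigl(\tfrac{\alpha(1-\alpha)}{r^2}+\varepsilon\bigr)\mathrm{Id}$ and the solution $\bar A_\varepsilon$ of the model Riccati equation with initial value $\bar A(r_0)-\varepsilon\,\mathrm{Id}$, so that $F_\varepsilon=\bar R_\varepsilon-R_\gamma\ge\varepsilon\,\mathrm{Id}>0$ and $W_\varepsilon(r_0)>0$; the first-failure argument then yields a strict contradiction, giving $A\ge\bar A_\varepsilon$ wherever $\bar A_\varepsilon$ exists, and continuous dependence of the scalar model ODE on parameters gives $\bar A_\varepsilon\to\bar A$ locally uniformly as $\varepsilon\downarrow 0$. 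With that correction the argument is complete; part $(2)$ goes through verbatim with the signs reversed, the $+W^2$ term causing no trouble since $A$ and $\bar A$ are smooth on all of $B(r_0,\infty)$.
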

%%
%%%%%%%%%%%　Proposition 2.2   %%%%%%%%%%%
%%
\begin{prop}
Let $(M,g)$ be an $n$-dimensional complete Riemannian manifold and $U$ an open subset of $M$. 
We assume that $E:= M - U$ has radial coordinates. 
Let $r_0 > 0$, $\alpha$, and $\beta > 0$ be constants. 
We assume that
\begin{align*}
  \alpha \ge 1.
\end{align*}
Then, the following holds$:$
\begin{enumerate}[$(1)$]
  \item If 
     \begin{align*}
      \begin{cases}
        \displaystyle \nabla dr 
        \ge \beta \left( g - dr \otimes dr \right)  
        & \text{on}~~S(r_0), \vspace{2mm} \\
        \displaystyle K_{{\rm rad.}} \le - \frac{ \alpha ( \alpha - 1 )}{r^2} 
        & \text{on }~~B(r_0, \infty),
      \end{cases}
     \end{align*}
    then we have
     \begin{align*}
	   \nabla dr & 
	   \ge \frac{\alpha}{r - r_0 + \frac{\alpha}{\beta}} 
	   \left( g - dr \otimes dr \right) 
	      \qquad {\rm on}~~B(r_0, \infty) .
     \end{align*}
  \item If 
     \begin{align*}
      \begin{cases}
        \displaystyle \nabla dr 
        \le \beta \left( g - dr \otimes dr \right)  
        & \text{on}~~S(r_0), \vspace{2mm} \\
        \displaystyle K_{{\rm rad.}} \ge - \frac{ \alpha ( \alpha - 1 )}{r^2} 
        & \text{on }~~B(r_0, \infty),
      \end{cases}
     \end{align*}
    then we have
     \begin{align*}
	   \nabla dr & 
	   \le \frac{\alpha}{r - r_0 + \frac{\alpha}{\beta}} 
	   \left( g - dr \otimes dr \right) 
	      \qquad {\rm on}~~B(r_0, \infty) .
     \end{align*}
\end{enumerate}
\end{prop}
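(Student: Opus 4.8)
The plan is to reduce the tensorial estimate to a scalar Riccati comparison along radial geodesics, which is exactly the content of the comparison theorem of Kasue \cite{Kasue}. Fix a unit-speed geodesic $\gamma$ with $\gamma' = \nabla r$ issuing from a point of $S(r_0)$, and let $A(r)$ denote the shape operator of $S(r)$, i.e.\ the $g$-symmetric endomorphism of $T_{\gamma(r)}S(r)$ associated with $\nabla dr$. Differentiating the Weingarten relation along $\gamma$ gives the matrix Riccati equation
\begin{align*}
  \nabla_{\partial_r} A + A^2 + \mathcal{R} = 0,
\end{align*}
where $\mathcal{R}$ is the symmetric curvature endomorphism $X \mapsto R(X,\nabla r)\nabla r$, whose quadratic form on unit vectors orthogonal to $\nabla r$ is precisely $K_{\mathrm{rad.}}$. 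Thus the curvature hypotheses are two-sided bounds $\mathcal{R} \le -\frac{\alpha(\alpha-1)}{r^2}\,\mathrm{Id}$ in part $(1)$ and $\mathcal{R} \ge -\frac{\alpha(\alpha-1)}{r^2}\,\mathrm{Id}$ in part $(2)$, while the conditions on $S(r_0)$ become the initial conditions $A(r_0) \ge \beta\,\mathrm{Id}$ and $A(r_0) \le \beta\,\mathrm{Id}$, respectively.

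Next I would identify the comparison function. Set $f(r) = \alpha/(r - r_0 + \alpha/\beta)$; then $f(r_0) = \beta$, and a direct differentiation gives
\begin{align*}
  f' + f^2 = \frac{\alpha(\alpha-1)}{(r - r_0 + \alpha/\beta)^2},
\end{align*}
so that $f = \psi'/\psi$ for the model warping $\psi(r) = (r - r_0 + \alpha/\beta)^{\alpha}$, whose radial curvature is $-\psi''/\psi$. The assumption $\alpha \ge 1$ guarantees that $\psi$ is positive and convex on $[r_0,\infty)$, hence that the scalar solution $f$ is defined, positive, and free of poles for every $r \ge r_0$; this is precisely where the absence of focal points before infinity enters.

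Then I would invoke the scalar Riccati comparison underlying Kasue's theorem (in the Eschenburg--Heintze form). In part $(1)$, the curvature operator $\mathcal{R}$ is bounded above by a multiple of the identity and $A(r_0) \ge \beta\,\mathrm{Id} = f(r_0)\,\mathrm{Id}$, so the smallest eigenvalue of $A(r)$ dominates the scalar solution $f(r)$ on all of $[r_0,\infty)$, which is the asserted lower bound $\nabla dr \ge \frac{\alpha}{r - r_0 + \alpha/\beta}(g - dr\otimes dr)$. Reversing every inequality --- bounding $\mathcal{R}$ below and controlling the largest eigenvalue of $A$ --- yields part $(2)$. Because the curvature bound is isotropic and the comparison solution is a scalar multiple of the identity, the matrix comparison collapses to the scalar one, so nothing is lost in passing between $\nabla dr$ and its extreme eigenvalue.

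The step I expect to be the main obstacle is exactly this matrix-to-scalar reduction: one must verify that a one-sided bound on the curvature operator $\mathcal{R}$ against a scalar model forces the corresponding one-sided inequality for the extreme eigenvalue of $A$ throughout the whole interval, together with the fact that the comparison solution does not blow up before $r = \infty$. This is the technical heart of Kasue's comparison theorem; granting it, the only remaining task is the short verification, recorded above, that $f$ solves the model Riccati equation with the correct boundary value $f(r_0) = \beta$.
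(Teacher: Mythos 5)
The paper gives no proof of this proposition at all---it simply refers the reader to Kasue's comparison theorem---so the only question is whether your Riccati argument is internally sound. The overall strategy is the right one, and the step you single out as the main obstacle (reducing the matrix comparison to a scalar one when the curvature bound is isotropic) is indeed standard. The genuine gap is elsewhere. A Riccati (or Sturm) comparison between $A$ and $f\,\mathrm{Id}$ requires the \emph{model} curvature $-\psi''/\psi = -\alpha(\alpha-1)/(r-r_0+\alpha/\beta)^2$ to lie on the correct side of the \emph{hypothesized} curvature bound $-\alpha(\alpha-1)/r^2$ along the whole geodesic, not merely that $f$ solves its own Riccati equation with $f(r_0)=\beta$. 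For part $(1)$ you need $\alpha(\alpha-1)/(r-r_0+\alpha/\beta)^2 \le \alpha(\alpha-1)/r^2$ for all $r\ge r_0$, which for $\alpha>1$ is equivalent to $\beta\le\alpha/r_0$; for part $(2)$ you need the reverse, i.e.\ $\beta\ge\alpha/r_0$. Neither condition appears among the hypotheses, and you never verify it.

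This gap cannot be closed, because the statement as literally written fails without such a compatibility condition. Take $n=2$, $r_0=1$, and a warped-product end $dr^2+\psi(r)^2\,d\theta^2$ with $\psi(r)=r^2+r^{-1}$ for $r\ge 1$ (smoothly extended inward to a complete surface). Then $K_{\mathrm{rad.}}=-\psi''/\psi=-2/r^2$ and $\nabla dr=(\psi'/\psi)(g-dr\otimes dr)$ with $\psi'(1)/\psi(1)=1/2$, so the hypotheses of part $(2)$ hold with $\alpha=2$ and $\beta=1/2$; but at $r=2$ one has $\psi'/\psi=5/6$, whereas the asserted bound is $\alpha/(r-r_0+\alpha/\beta)=2/5$. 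What your argument does prove is the proposition with the extra hypothesis $\beta\le\alpha/r_0$ in $(1)$ and $\beta\ge\alpha/r_0$ in $(2)$ (equivalently, with the curvature hypothesis restated with $(r-r_0+\alpha/\beta)^2$ in the denominator), and that corrected version suffices for Theorems $1.3$ and $1.4$: in $(2)$ one may enlarge $\beta$ to $\max\{\beta,\alpha/r_0\}$ and in $(1)$ shrink it to $\min\{\beta,\alpha/r_0\}$ before applying the comparison, which only changes the conclusion by a shift that does not affect the asymptotics $\alpha/r$. You should state and check this side condition explicitly rather than leaving the matching of the model curvature to the hypothesis implicit.
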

For the proof of Proposition $2.1$ and $2.2$, see Kasue \cite{Kasue}.
%%
%%%%%%   Section 3     %%%%%%
%%
\section{Analytic propositions} 

In this section, we shall prepare some analytic propositions toward the proof of Theorem $1.1$. 

Let $(M,g)$ be an $n$-dimensional complete Riemannian manifold and $U$ an open subset of $M$. 
Assume that $E:= M - \overline{U}$ is an end with radial coordinates. 
We shall consider the eigenvalue equation 
\begin{align*}
    \Delta u + \lambda u = 0 \qquad {\rm on}~~E:= M - \overline{U},
\end{align*}
where $\lambda >0$ is a constant.  

Let $\rho (r)$ be a $C^{\infty}$ function of $r \in [r_0,\infty)$, and put 
\begin{align*}
  v(x) = \exp \bigl( \rho (r(x)) \bigr) u(x) \qquad {\rm for}~~x \in E.
\end{align*}
Then it follows that $v$ satisfies on $B(r_0,\infty)$ the equation 
\begin{align*}
    & \Delta v - 2 \rho '(r) \frac{\partial v}{\partial r} + qv = 0,\\
    & q = |\nabla \rho (r)|^2 - \Delta \rho (r) + \lambda \nonumber \\
    & \hspace{1.8mm}  = \left| \rho '(r) \right|^2 - \rho ''(r) 
      - \rho '(r)\Delta r + \lambda.
\end{align*}
As is mentioned in section $1$, we denote by $dA$ the measures on each level surface $S(t)~~(t>0)$ induced from the Riemannian measure $dv_g$ on $(M,g)$. 
%%
%%%%%%%%%%%　Proposition 3.1   %%%%%%%%%%%
%%
\begin{prop}
For any $\psi \in C^{\infty}(M-\overline{U})$ and $r_0\le s< t$, we have
\begin{align*}
  & \int_{B(s,t)} \left\{ |\nabla v|^2 - q|v|^2 \right\}\psi \,dv_g \\
 =& \left( \int_{S(t)} - \int_{S(s)} \right)
    \left( \frac{\partial v}{\partial r} \right) \psi v \,dA 
    - \int_{B(s,t)}
    \left\langle 
    \nabla \psi + 2 \psi \rho '(r) \nabla r , \nabla v 
    \right\rangle v \,dv_g.
\end{align*}
\end{prop}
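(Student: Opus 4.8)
The plan is to read the claimed identity as a Green-type integration-by-parts formula, driven by the equation $\Delta v - 2\rho'(r)\,\partial v/\partial r + qv = 0$ that $v$ satisfies on $B(r_0,\infty)$. The natural device is the divergence theorem applied to the vector field $X = \psi v\,\nabla v$ on the region $B(s,t)$. Expanding the divergence by the product rule gives
\begin{align*}
 \mathrm{div}(\psi v\,\nabla v) = \psi v\,\Delta v + \langle \nabla(\psi v),\nabla v\rangle = \psi v\,\Delta v + \psi|\nabla v|^2 + v\langle \nabla\psi,\nabla v\rangle,
\end{align*}
where I use $\Delta = \mathrm{div}\,\nabla$ (the convention consistent with $-\Delta \ge 0$). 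Integrating this over $B(s,t)$ is the backbone of the argument: the left-hand side becomes a boundary integral and the three right-hand terms reproduce, after one substitution, the three groups appearing in the statement. Here $v$ is real, as it comes from the real eigenfunction $u$; in a complex setting one replaces one factor $v$ by $\bar v$ and takes real parts, with no change in structure.

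Next I would treat the boundary. The boundary of $B(s,t)$ is $S(s)\cup S(t)$, and the crucial point — the only real bookkeeping subtlety — is that the outward unit normal is $+\nabla r$ on the outer level set $S(t)$ but $-\nabla r$ on the inner level set $S(s)$. Since $\langle \nabla v,\nabla r\rangle = \partial v/\partial r$, the divergence theorem yields
\begin{align*}
 \int_{B(s,t)}\mathrm{div}(\psi v\,\nabla v)\,dv_g = \left( \int_{S(t)} - \int_{S(s)} \right) \left( \frac{\partial v}{\partial r} \right)\psi v\,dA,
\end{align*}
which is precisely the boundary term in the proposition.

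Finally I would eliminate $\Delta v$ using the equation, writing $\psi v\,\Delta v = 2\rho'(r)\,\psi v\,\partial v/\partial r - q\psi v^2$, and then rewrite $2\rho'(r)\,\psi v\,\partial v/\partial r = v\langle 2\psi\rho'(r)\nabla r,\nabla v\rangle$ via $\partial v/\partial r = \langle \nabla v,\nabla r\rangle$. Combining this with the term $v\langle \nabla\psi,\nabla v\rangle$ produces the single expression $v\langle \nabla\psi + 2\psi\rho'(r)\nabla r,\nabla v\rangle$. Collecting the surviving terms and transposing then gives
\begin{align*}
 \int_{B(s,t)}\{|\nabla v|^2 - q|v|^2\}\psi\,dv_g = \left( \int_{S(t)} - \int_{S(s)} \right)\left(\frac{\partial v}{\partial r}\right)\psi v\,dA - \int_{B(s,t)}\langle \nabla\psi + 2\psi\rho'(r)\nabla r,\nabla v\rangle v\,dv_g,
\end{align*}
which is the assertion. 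I expect no genuine analytic obstacle beyond the smoothness of $\psi$, $v$ and of the level sets (guaranteed by the radial-coordinate hypothesis, so that the divergence theorem applies on $B(s,t)$); the entire content is the correct application of the divergence theorem with attention to the sign of the normal on the two pieces of $\partial B(s,t)$.
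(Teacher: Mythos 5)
Your proof is correct: the divergence-theorem computation with $X=\psi v\,\nabla v$, the sign convention for the outward normal ($+\nabla r$ on $S(t)$, $-\nabla r$ on $S(s)$), and the elimination of $\Delta v$ via the equation $\Delta v - 2\rho'(r)\partial v/\partial r + qv=0$ reproduce the identity exactly. The paper does not prove this proposition itself but refers to its companion paper (``obtained by setting $c=0$ in [18, Proposition 3.1]''), and your Green's-formula argument is precisely the standard derivation that reference supplies.
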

%%
%%%%%%%%%%%　Proposition 3.2   %%%%%%%%%%%
%%
\begin{prop}
Let $\nabla r, X_1, X_2, \cdots , X_{n-1}$ be an orthonormal base for the tangent space $T_xM$ at each point $x \in M - \overline{U}$. 
Then, for any real numbers $\gamma$, $\varepsilon$, and $0\le s<t$, we have
\begin{align*}
  & \int_{S(t)} r^{\gamma} \left\{ 
    \left(\frac{\partial v}{\partial r}\right)^2 + \frac{1}{2}q|v|^2 
    - \frac{1}{2}|\nabla v|^2 
    + \frac{\gamma - \varepsilon }{2r}
    \frac{\partial v}{\partial r}v \right\} \,dA \nonumber \\
  & + \int_{S(s)} r^{\gamma} \left\{ \frac{1}{2}|\nabla v|^2
    - \frac{1}{2}q|v|^2 - \left(\frac{\partial v}{\partial r}\right)^2
    - \frac{\gamma-\varepsilon }{2r}\frac{\partial v}{\partial r}v \right\} 
    \,dA \nonumber \\
 =& \int_{B(s,t)} r^{\gamma -1} \left\{ r(\nabla dr)(\nabla v,\nabla v)
    - \frac{1}{2}(r\Delta r + \varepsilon )
    \sum_{i=1}^{n-1} \left( dv(X_i) \right)^2 \right\} \,dv_g \\
  & + \int_{B(s,t)} r^{\gamma-1} \left\{ \gamma - 
    \frac{1}{2}( r\Delta r + \varepsilon ) + 2r \rho'(r) \right\}
    \left(\frac{\partial v}{\partial r}\right)^2 \,dv_g \nonumber \\
  & + \frac{1}{2} \int_{B(s,t)} r^{\gamma-1}
    \left\{ r\left( \frac{\partial q}{\partial r} \right)
    + q( r\Delta r + \varepsilon ) \right\} |v|^2 \,dv_g \nonumber \\
  & + \frac{\gamma -\varepsilon}{2} \int_{B(s,t)} r^{\gamma-1}
    \left\{ \frac{\gamma-1}{r} + 2\rho ' (r) \right\} 
    \frac{\partial v}{\partial r}v \,dv_g.
\end{align*}
\end{prop}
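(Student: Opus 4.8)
The plan is to prove this as a Rellich–Pohozaev-type identity obtained by the multiplier method. I would test the equation $\Delta v - 2\rho'(r)\frac{\partial v}{\partial r} + qv = 0$ against the radial multiplier
\[
   \Phi := r^{\gamma}\,\frac{\partial v}{\partial r} + \frac{\gamma-\varepsilon}{2}\,r^{\gamma-1}\,v
\]
and integrate over $B(s,t)$. The first summand is the usual Rellich multiplier $r^{\gamma}\partial v/\partial r$, while the second is a lower-order Pohozaev correction whose coefficient $\frac{\gamma-\varepsilon}{2}$ is precisely what produces the boundary term $\frac{\gamma-\varepsilon}{2r}\frac{\partial v}{\partial r}v$ and the parameter $\varepsilon$ throughout. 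Since $\int_{B(s,t)}\Phi\,(\Delta v - 2\rho'\partial_r v + qv)\,dv_g = 0$, the asserted identity will emerge once every term is rewritten as boundary integrals over $S(s),S(t)$ plus volume integrals over $B(s,t)$.

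Two ingredients drive the computation. First, I would apply the divergence theorem to $\int_{B(s,t)}\Phi\,\Delta v\,dv_g$, producing the boundary integrals $\left(\int_{S(t)}-\int_{S(s)}\right)\Phi\,\partial_r v\,dA$ (the outward normal being $\pm\nabla r$) together with the volume integral $-\int_{B(s,t)}\langle\nabla\Phi,\nabla v\rangle\,dv_g$. Second, the key geometric step is the pointwise commutator identity
\[
   \Big\langle \nabla\tfrac{\partial v}{\partial r},\,\nabla v\Big\rangle
   = \tfrac12\,\tfrac{\partial}{\partial r}|\nabla v|^2 + (\nabla dr)(\nabla v,\nabla v),
\]
which follows from differentiating $\frac{\partial v}{\partial r}=\langle\nabla v,\nabla r\rangle$, the symmetry of $\mathrm{Hess}\,v$, and $\nabla dr=\mathrm{Hess}\,r$. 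This is where the second fundamental form $\nabla dr$ enters the picture, and it is exactly the term whose sign the curvature hypotheses control in the later theorems.

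Next comes radial integration by parts: using $\mathrm{div}(f\nabla r)=\frac{\partial f}{\partial r}+f\Delta r$ together with the product structure $dv_g = dA\,dr$ on the end (valid since $\exp^{\perp}_{\partial U}$ is a diffeomorphism and $r$ is smooth on $B(r_0,\infty)$), I would convert $\int r^{\gamma}\frac{\partial}{\partial r}|\nabla v|^2$ and $\int r^{\gamma}q\,\frac{\partial}{\partial r}|v|^2$ (using $v\,\partial_r v=\tfrac12\partial_r|v|^2$) into boundary terms carrying $r^{\gamma}|\nabla v|^2$ and $r^{\gamma}q|v|^2$, plus volume terms carrying the factors $\Delta r$ and $\partial q/\partial r$; the contribution of the lower-order multiplier $\frac{\gamma-\varepsilon}{2}r^{\gamma-1}v$ is most cleanly obtained by invoking Proposition 3.1 with $\psi=\frac{\gamma-\varepsilon}{2}r^{\gamma-1}$. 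Finally I would collect everything. Splitting $|\nabla v|^2=(\partial_r v)^2+\sum_i (dv(X_i))^2$ and gathering the boundary contributions yields the stated $S(t)$ and $S(s)$ integrands. In the volume integrals, the factor $\gamma$ coming from $\tfrac12(\gamma r^{\gamma-1}+r^{\gamma}\Delta r)|\nabla v|^2$ cancels against the $\gamma$ in the $\frac{\gamma-\varepsilon}{2}r^{\gamma-1}|\nabla v|^2$ term, leaving precisely $r\Delta r+\varepsilon$; this is the mechanism by which $\varepsilon$ surfaces inside the combination $(r\Delta r+\varepsilon)$. The remaining regrouping reproduces the four volume integrals verbatim.

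I expect the main obstacle to be bookkeeping rather than conceptual: keeping the signs and the $\gamma$/$\varepsilon$ cancellations consistent across the several integrations by parts, and correctly tracking which boundary pieces pair off into the $\left(\int_{S(t)}-\int_{S(s)}\right)$ structure. The only genuinely non-routine point is the Hessian identity producing $(\nabla dr)(\nabla v,\nabla v)$; everything else is organized cancellation.
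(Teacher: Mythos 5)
Your proposal is correct: multiplying the equation by the Rellich--Pohozaev multiplier $r^{\gamma}\partial_r v+\frac{\gamma-\varepsilon}{2}r^{\gamma-1}v$, using the Hessian identity $\langle\nabla\partial_r v,\nabla v\rangle=\frac12\partial_r|\nabla v|^2+(\nabla dr)(\nabla v,\nabla v)$, and integrating by parts radially does reproduce every boundary and volume term of the stated identity (in particular the cancellations producing the combinations $r\Delta r+\varepsilon$ check out). This is essentially the same derivation as the paper's, which does not reprove the identity but cites it from the companion paper [18] (setting $c=0$ there), where it is obtained by exactly this Kato--Eidus--Mochizuki-type multiplier computation.
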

%
%%%%%%%%%%%　 Lemma 3.1   %%%%%%%%%%%
%
\begin{lem}
For any $ \beta \in {\bf R}$, we have 
\begin{align}
   \left( \int_{S(t)} - \int_{S(s)} \right) r^{\beta } |v|^2 \,dA
 = \int_{B(s,t)} r^{\beta }
   \left\{ \left( \Delta r + \frac{\beta }{r} \right)|v|^2
   + 2v\frac{\partial v}{\partial r} \right\} \,dv_g .
\end{align}
\end{lem}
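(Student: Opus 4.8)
The plan is to recognize this identity as a direct consequence of the divergence theorem applied to a well-chosen radial vector field on the annular region $B(s,t)$. Specifically, I would introduce
\begin{align*}
  X = r^{\beta} |v|^2 \, \nabla r
\end{align*}
and then compare the two sides of $\int_{B(s,t)} \operatorname{div} X \, dv_g = \int_{\partial B(s,t)} \langle X, \nu \rangle \, dA$, where $\nu$ denotes the outward unit normal to $B(s,t)$.

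For the boundary term, I would first use that $r$ is a distance function on the end, so that $|\nabla r| \equiv 1$ and $\nabla r$ is the unit radial field $\partial / \partial r$. The boundary $\partial B(s,t)$ consists of the two level hypersurfaces $S(t)$ and $S(s)$, whose outward unit normals relative to $B(s,t)$ are $\nabla r$ and $-\nabla r$ respectively. Since $r \equiv t$ on $S(t)$ and $r \equiv s$ on $S(s)$, the boundary integral collapses to $\int_{S(t)} r^{\beta}|v|^2 \, dA - \int_{S(s)} r^{\beta}|v|^2 \, dA$, which is exactly the left-hand side of the claimed identity.

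For the volume term, I would expand the divergence by the product rule,
\begin{align*}
  \operatorname{div}\!\left( r^{\beta}|v|^2 \, \nabla r \right)
  = \left\langle \nabla\!\left( r^{\beta}|v|^2 \right), \nabla r \right\rangle
    + r^{\beta}|v|^2 \operatorname{div}(\nabla r),
\end{align*}
noting that $\operatorname{div}(\nabla r) = \Delta r$, while the first term is a derivative along the unit field $\nabla r$ and hence equals $\frac{\partial}{\partial r}\!\left( r^{\beta}|v|^2 \right) = \beta r^{\beta-1}|v|^2 + 2 r^{\beta} v \, \frac{\partial v}{\partial r}$. Collecting the terms reproduces precisely $r^{\beta}\bigl\{ (\Delta r + \beta/r)|v|^2 + 2 v\, \partial v / \partial r \bigr\}$, the integrand on the right-hand side.

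The computation is routine and I do not expect a substantive analytic obstacle; the only points deserving care are the hypotheses validating the divergence theorem. Because $r_0 \le s$, the weight $r^{\beta}$ is smooth and bounded on $B(s,t)$ even when $\beta < 0$, so there is no singularity to worry about. Moreover, the assumption that $E$ is an end with radial coordinates guarantees, via the diffeomorphism $\exp_{\partial U}^{\perp}$, that each $S(\tau)$ is a compact smooth hypersurface and that $\overline{B(s,t)}$ is a compact manifold with smooth boundary, so the divergence (Green) theorem applies verbatim.
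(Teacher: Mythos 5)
Your proof is correct: applying the divergence theorem to $X = r^{\beta}|v|^2\,\nabla r$ on the compact annulus $\overline{B(s,t)}$, using $|\nabla r|\equiv 1$ and the outward normals $\pm\nabla r$ on $S(t)$ and $S(s)$, gives exactly the stated identity. The paper itself supplies no in-text proof and merely cites Lemma $3.2$ of the companion paper \cite{K3}; your divergence-theorem computation is precisely the standard derivation one obtains there, so the approaches coincide.
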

%%
%%%%%%%%%%%　 Lemma 3.2   %%%%%%%%%%%
%%
\begin{lem}
For any $ m \in {\bf R}$, we have
\begin{align*}
   &  \int_{B(x,\infty)} r^{1-2m} 
      \left\{ |\nabla v|^2 - q|v|^2 \right\} \,dv_g  \nonumber \\
 = &  - \frac{1}{2} \frac{d}{dx} \left( x^{1-2m} \int_{S(x)} |v|^2 \,dA \right)
      - \frac{1}{2} \int_{S(x)} r^{-2m} 
      \left\{ 2m - 1 - r \Delta r \right\} |v|^2 \,dA
      \nonumber \\
   &  - \int_{B(x,\infty)} r^{-2m} 
      \left( \frac{\partial v}{\partial r} \right) v \,dv_g,
\end{align*}
\end{lem}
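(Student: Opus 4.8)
The plan is to prove this as a Rellich-type identity: integrate the second-order term by parts on the unbounded annulus $B(x,\infty)$ and then repackage the resulting boundary contributions through Lemma 3.1. Concretely, I would take $\psi = r^{1-2m}$ in Proposition 3.1 on $B(x,t)$ and let $t \to \infty$. Since $\nabla(r^{1-2m}) + 2r^{1-2m}\rho'\nabla r = \big((1-2m)r^{-2m} + 2r^{1-2m}\rho'\big)\nabla r$, this yields, provided the surface term at infinity $\int_{S(t)} r^{1-2m}v\,\partial_r v\,dA$ tends to $0$,
\begin{align*}
\int_{B(x,\infty)} r^{1-2m}\{|\nabla v|^2 - q|v|^2\}\,dv_g
= -\int_{S(x)} r^{1-2m}\,v\,\frac{\partial v}{\partial r}\,dA
- \int_{B(x,\infty)}\big\{(1-2m)r^{-2m}+2r^{1-2m}\rho'\big\}\,v\,\frac{\partial v}{\partial r}\,dv_g .
\end{align*}

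Next I would convert the inner surface term and the derivative term of the target into this same shape. Writing $J(x)=\int_{S(x)}|v|^2\,dA$ and differentiating $x^{1-2m}J(x)$, the coarea formula together with Lemma 3.1 (with $\beta=0$) gives $J'(x)=\int_{S(x)}\{\Delta r\,|v|^2+2v\,\partial_r v\}\,dA$, whence
\begin{align*}
-\tfrac12\frac{d}{dx}\big(x^{1-2m}J(x)\big)
= -\tfrac12\int_{S(x)}x^{-2m}\{(1-2m)+x\Delta r\}|v|^2\,dA
- x^{1-2m}\int_{S(x)} v\,\frac{\partial v}{\partial r}\,dA .
\end{align*}
The pure $|v|^2$ and $r\Delta r$ contributions here are exactly cancelled by the explicit second term $-\tfrac12\int_{S(x)}r^{-2m}\{2m-1-r\Delta r\}|v|^2\,dA$, so the first two terms on the right-hand side of the lemma collapse to $-x^{1-2m}\int_{S(x)}v\,\partial_r v\,dA$, which is precisely the inner surface term produced above. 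This cancellation is the elegant core of the argument, and it is what motivates packaging the boundary data as the derivative $-\tfrac12\,d/dx\,(x^{1-2m}J)$.

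It then remains only to match the bulk integrals of $v\,\partial_r v$. Matching requires $(1-2m)r^{-2m}+2r^{1-2m}\rho'=r^{-2m}$, i.e. $r\rho'(r)=m$; this is the classical weight $e^{\rho}=r^{m}$ (so $v=r^{m}u$) underlying the Mochizuki--Eidus--Kato substitution referred to in the Introduction, under which the bulk term above becomes exactly $-\int_{B(x,\infty)}r^{-2m}\,v\,\frac{\partial v}{\partial r}\,dv_g$, completing the identity. The main obstacle is analytic rather than algebraic: one must justify that $\int_{S(t)}r^{1-2m}\,v\,\frac{\partial v}{\partial r}\,dA\to 0$ as $t\to\infty$, so that the passage from Proposition 3.1 on $B(x,t)$ to the integral over $B(x,\infty)$ is legitimate. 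This relies on the a priori decay of $v$ supplied by the surrounding estimates, and the coefficient of the first-order drift term $2\rho'\,\partial_r v$ must be tracked with care so as to land the single factor $r^{-2m}$ in the final integral.
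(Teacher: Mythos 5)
Your derivation is correct. The paper itself contains no proof of Lemma 3.2 --- it only refers to equations (32) and (33) of the companion paper [18] with $c=0$ --- but your route (Proposition 3.1 with $\psi=r^{1-2m}$ on $B(x,t)$, letting $t\to\infty$, combined with the $x$-derivative of Lemma 3.1 at $\beta=0$ to rewrite the inner boundary term as $-\tfrac12\frac{d}{dx}\bigl(x^{1-2m}\int_{S(x)}|v|^2\,dA\bigr)$ plus the stated $\{2m-1-r\Delta r\}$ correction) is exactly the computation that identity encodes, and your algebra checks out. Two of your observations deserve emphasis. First, matching the bulk terms forces $r\rho'(r)=m$, i.e.\ $v=r^m u$: as stated in Section 3, where $\rho$ is an arbitrary smooth weight, the lemma is literally false without this; it is an implicit hypothesis, consistent with the only use of the lemma (Section 5, where $\rho(r)=m\log r$), and you were right to identify it as such rather than derive it. Second, the vanishing of $\int_{S(t)}r^{1-2m}v\,\partial_r v\,dA$ at infinity is indeed the only analytic point; in the application it holds along a divergent sequence $t_i$ by Proposition 4.1 (note $r^{1-2m}v\,\partial_r v=mu^2+ru\,\partial_r u$, and $\int r^2\{u^2+|\nabla u|^2\}\,dv_g<\infty$ forces $\liminf_t t\int_{S(t)}r^2\{u^2+|\nabla u|^2\}\,dA=0$), which suffices since the bulk integrals over $B(x,\infty)$ converge absolutely. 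Nothing is missing.
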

Proposition $3.1$ and $3.2$ are obtained by setting $c=0$ in [18, Proposition $3.1$ and $3.3$], respectively; 
Lemma $3.1$ also follows from [18, Lemma $3.2$]; 
Lemma $3.2$ is also got by putting $c=0$ in [18, equations $(32)$ and $(33)$]. 
%%
%%%%%%   SECTION  4     %%%%%%
%%
\section{Faster than polynomial decay}

Let $(M,g)$ be an $n$-dimensional noncompact complete Riemannian manifold and $U$ an open subset of $M$. 
We assume that $E: = M - \overline{U}$ is an end with radial coordinates. 
We denote $r = {\rm dist}(U,*)$ on $E$. 
Let us set 
\begin{align*}
   \widetilde{g} = g - dr \otimes dr
\end{align*} 
and assume that there exists $r_0>0$ such that 
\begin{align}
   \frac{a}{r} \,\widetilde{g} \le \nabla dr \le \frac{b}{r} \,\widetilde{g} 
   \qquad {\rm on}~~B(r_0,\infty), \tag{$*_1$}
\end{align}
where $a$ and $b$ are positive constants satisfying 
\begin{align}
   a \le b \quad {\rm and} \quad \frac{n+1}{n-1}a > b  \tag{$*_2$} .
\end{align}
We also assume that 
\begin{align}
   {\rm Ric}\, (\nabla r, \nabla r) \ge - (n-1)\frac{b_1(r)}{r}
   \qquad {\rm on}~~B(r_0,\infty), \tag{$*_3$}
\end{align}
where $ b_1 (t) $ is a positive-valued decreasing function of $t \in [r_0,\infty)$ satisfying \linebreak 
$\lim_{t \to \infty} b_1 (t) = 0$. 

In the sequel, we shall often use the following notation for simplicity:
\begin{align*}
   \widehat{a} = ( n - 1 ) a;~
   \widehat{b} = ( n - 1 ) b;~
   \widehat{b}_1 (r) = ( n - 1 ) b_1 (r).
\end{align*} 
%%
%%%%%%%%%%%　PROPOSITION 4.1   %%%%%%%%%%%
%%
\begin{prop}
Assume that there exist constants $r_0 > 0$, $a$, and $b$ such that $(*_1)$, $(*_2)$, and $(*_3)$ hold. 
Let $\lambda > 0$ be a constant and $u$ a solution to 
\begin{align*}
     \Delta u + \lambda u = 0 \qquad {\rm on}~~B(r_0,\infty).
\end{align*}
Moreover, let 
\begin{align}
   \gamma > \frac{n-1}{2} ( b - a )   \tag{$*_4$}
\end{align} 
be a constant and assume that $u$ satisfies the condition$:$
\begin{align}
   \liminf_{t\to \infty} ~ t^{\gamma} \int_{S(t)}
   \left\{ \left( \frac{\partial u}{\partial r} \right)^2 + |u|^2 
   \right\} \,dA=0.
\end{align}
Then, we have for any $m > 0$
\begin{align}
   \int_{B(r_0,\infty)} r^m \left\{ |u|^2 + |\nabla u|^2 \right\} 
    \,dv_g < \infty.
\end{align}
\end{prop}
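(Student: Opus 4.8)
The plan is to take $\rho\equiv 0$, so that $v=u$ and $q=\lambda$, and to extract a coercive weighted energy from the Rellich-type identity of Proposition 3.2, whose surface terms are then killed along a sequence supplied by the hypothesis. Everything hinges on a sign analysis of the three bulk integrands, fed by the curvature bounds. First I would record the consequences of $(*_1)$: taking traces gives $\widehat a/r\le\Delta r\le\widehat b/r$, hence $\widehat a\le r\Delta r\le\widehat b$, while the tangential pinching gives $a\,|\nabla^{S}u|^{2}\le r(\nabla dr)(\nabla u,\nabla u)\le b\,|\nabla^{S}u|^{2}$, where $|\nabla^{S}u|^{2}=\sum_{i}(du(X_{i}))^{2}=|\nabla u|^{2}-(\partial u/\partial r)^{2}$.

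Substituting these into the three bulk integrals of Proposition 3.2 (with $q=\lambda$, so $\partial q/\partial r=0$), the tangential coefficient is bounded below by $a-\tfrac12(\widehat b+\varepsilon)$, the radial coefficient of $(\partial u/\partial r)^{2}$ by $\gamma-\tfrac12(\widehat b+\varepsilon)$, and the coefficient of $|u|^{2}$ by $\tfrac{\lambda}{2}(\widehat a+\varepsilon)$. Requiring all three to be positive forces
\[
 -\,\widehat a<\varepsilon<\min\{\,2a-\widehat b,\ 2\gamma-\widehat b\,\},
\]
and this interval is nonempty precisely because $(*_2)$ gives $\widehat b-\widehat a<2a$ and $(*_4)$ gives $\widehat b-\widehat a<2\gamma$. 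This is the one place where the hypotheses are used sharply. Fixing such an $\varepsilon$, the remaining cross term $\tfrac{(\gamma-\varepsilon)(\gamma-1)}{2}\int r^{\gamma-2}(\partial u/\partial r)\,u\,dv_g$ is of lower order (it carries an extra factor $r^{-1}$ relative to the diagonal terms), so after enlarging $r_0$ it is absorbed by Cauchy--Schwarz, and the whole bulk is bounded below by $c\int_{B(r_0,\infty)}r^{\gamma-1}\{(\partial u/\partial r)^{2}+|\nabla u|^{2}+|u|^{2}\}\,dv_g$ for some $c>0$.

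Next I would dispose of the surface terms. Writing $\Phi_\gamma(t)$ for the $S(t)$-integral in Proposition 3.2 (the $S(s)$-integrand is its exact negative), the hypothesis yields a sequence $t_k\to\infty$ along which $t_k^{\gamma}\int_{S(t_k)}\{(\partial u/\partial r)^2+|u|^2\}\,dA\to0$; discarding the favorable term $-\tfrac12|\nabla u|^2\le0$ shows $\limsup_k\Phi_\gamma(t_k)\le0$. Telescoping Proposition 3.2 over $B(r_0,t_k)$ as $\Phi_\gamma(t_k)-\Phi_\gamma(r_0)=\textrm{bulk}\ge0$ and letting $k\to\infty$ then gives finiteness of $\int_{B(r_0,\infty)}r^{\gamma-1}\{(\partial u/\partial r)^2+|\nabla u|^2+|u|^2\}\,dv_g$, i.e. the assertion for the base exponent $m_0=\gamma-1$.

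Finally the statement for all $m>0$ follows by induction. Given finiteness at exponent $m$, the coarea formula forces $\liminf_t t^{m+1}\int_{S(t)}\{(\partial u/\partial r)^2+|\nabla u|^2+|u|^2\}\,dA=0$, and Lemma 3.1 upgrades this into control of the weighted surface energy; re-entering the scheme above with the larger weight (the admissibility interval for $\varepsilon$ only widens as the exponent grows) then advances the power. The main obstacle is to make each iteration produce a \emph{genuine} gain rather than merely reproduce the same exponent, since Proposition 3.2 alone transfers a flux at weight $r^{\gamma}$ only into a bulk at weight $r^{\gamma-1}$, which coarea returns to the same level. I expect this to require combining the coercive Rellich estimate with the energy identity of Lemma 3.1 (and Lemma 3.2) into a differential inequality for the weighted surface energy whose solution decays strictly faster at each stage; it is here that the strict margins in $(*_2)$ and $(*_4)$, together with the positivity $\lambda>0$ that keeps the indefinite quantity $|\nabla u|^2-\lambda|u|^2$ under control, are indispensable.
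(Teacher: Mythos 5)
Your setup --- taking $\rho\equiv 0$ in Proposition 3.2 so that $v=u$, $q=\lambda$, choosing $\varepsilon$ in the window $-\widehat a<\varepsilon<\min\{2a-\widehat b,\,2\gamma-\widehat b\}$ (nonempty precisely by $(*_2)$ and $(*_4)$), and killing the $S(t)$-flux along the sequence supplied by hypothesis $(3)$ --- matches the paper's opening moves, and it does yield finiteness of $\int_{B(r_0,\infty)}r^{\gamma-1}\{|u|^2+|\nabla u|^2\}\,dv_g$. (The paper absorbs the cross term by adding $\alpha$ times Lemma 3.1 with $\beta=\gamma-1$ rather than by direct Cauchy--Schwarz; this is not merely cosmetic, because it also makes the $S(s)$-surface term have a definite sign, which is needed below.) The genuine gap is exactly the one you flag yourself: the bootstrap to arbitrary $m>0$. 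As you observe, re-entering the scheme with the exponent $m+1$ supplied by the coarea formula only reproduces the exponent $m$, and the ``differential inequality for the weighted surface energy'' you hope will rescue the iteration is not supplied and is not what the paper does. As written, your argument proves the conclusion only up to $m=\gamma-1$, not for all $m>0$.

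The paper's missing ingredient is an integration-over-the-lower-endpoint argument. Because the $S(s)$-terms are arranged to have a favorable sign, after sending $t=t_i\to\infty$ one has a \emph{pointwise-in-$s$} inequality
\[
\tfrac12\int_{S(s)} r^{\gamma}\bigl\{|\nabla u|^2-\lambda|u|^2\bigr\}\,dA
\;\ge\;
c_1\int_{B(s,\infty)}r^{\gamma-1}\bigl\{|\nabla u|^2+|u|^2\bigr\}\,dv_g
\qquad\text{for every }s\ge r_2 .
\]
Integrating this in $s$ over $[t,t_1]$ turns the left side, via the coarea formula, into $\int_{B(t,t_1)}r^{\gamma}\{|\nabla u|^2-\lambda|u|^2\}\,dv_g$, which Proposition 3.1 with $\psi=r^{\gamma}$ converts into boundary fluxes (controlled by hypothesis $(3)$) plus a bulk term of weight $r^{\gamma-1}$ already known to be finite; on the right side Fubini's theorem produces $\int_{B(t,\infty)}(r-t)\,r^{\gamma-1}\{\cdots\}\,dv_g$, a net gain of one power of $r$. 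Iterating --- integrating the resulting inequality in $t$ over $[t_1,\infty)$ again and again --- yields $(r-t)^{k}r^{\gamma-1}$ on the right for every $k$, hence the assertion for $m=\gamma,\gamma+1,\gamma+2,\dots$ and therefore for all $m>0$. Without this step, or an equivalent mechanism for strictly advancing the exponent, the proposition is not proved.
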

\begin{proof}
We shall combine Proposition $3.2$ and Lemma $3.1$; put $ \rho (r) = 0 $ in Proposition $3.2$; put $\beta = \gamma -1$ in Lemma $3.1$ and multiply $(2)$ by a positive constant $\alpha$. 
Then $v = u$ and $q = \lambda $ and 
\begin{align}
    & \int_{S(t)} r^{ \gamma }
      \left\{ \left( \frac{\partial u}{\partial r} \right)^2 
      + \frac{1}{2}\lambda |u|^2
      + \frac{\gamma - \varepsilon }{2r} \frac{\partial u}{\partial r} u
      + \frac{\alpha }{r}|u|^2 
      \right\} \,dA  \nonumber \\
    & + \int_{S(s)} r^{ \gamma }
      \left\{ \frac{1}{2}|\nabla u|^2 - \frac{1}{2}\lambda |u|^2 
      - \left( \frac{\partial u}{\partial r} \right)^2 
      - \frac{\gamma - \varepsilon }{2r}\frac{\partial u}{\partial r}u 
      - \frac{\alpha }{r}|u|^2 
      \right\} \,dA   \nonumber \\
\ge & \int_{B(s,t)} r^{ \gamma - 1} \left\{r(\nabla dr)(\nabla u,\nabla u)
      - \frac{1}{2}( r\Delta r + \varepsilon )
      \sum_{i=1}^{n-1} \left( du(X_i) \right)^2 \right\} \,dv_g \nonumber \\
    & + \int_{B(s,t)} r^{\gamma-1}
      \left\{ \gamma - \frac{1}{2}(r\Delta r + \varepsilon ) \right\}
      \left( \frac{\partial u}{\partial r} \right)^2 \,dv_g   \nonumber \\
    & + \frac{\lambda }{2}\int_{B(s,t)} r^{\gamma-1} 
      \left\{ r\Delta r + \varepsilon 
      + \frac{2\alpha}{\lambda} \left( \Delta r + \frac{\gamma -1}{r} \right)
      \right\} |u|^2 \,dv_g    \nonumber \\
    & + \int_{B(s,t)} r^{\gamma-1} 
      \left\{
      \frac{( \gamma - \varepsilon )( \gamma -1 )}{2r} + 2\alpha
      \right\}
      \frac{\partial u}{\partial r}u \,dv_g.
\end{align}
Now, our assumptions $ 2 \gamma > ( \widehat{b} - \widehat{a} )$ and $ \frac{n+1}{n-1}a > b $ respectively imply that $ 2 \gamma - \widehat{b} + \widehat{a} > 0 $ and $ \widehat{a} > \widehat{b} - 2a $, and hence, we can choose a constant $ \varepsilon $ so that 
\begin{align}
     & \widehat{a} > - \varepsilon > \widehat{b} - 2a ~(\ge 0), \\
     & 2 \gamma - \widehat{b} - \varepsilon > 0. \nonumber
\end{align}
Then, we see that for sufficiently large $r$
\begin{align*}
    &  r(\nabla dr)( \nabla u , \nabla u ) 
       - \frac{1}{2}( r\Delta r + \varepsilon ) \sum_{i=1}^{n-1} 
       \left( du(X_i) \right)^2  \nonumber  \\
    &  \hspace{20mm} \ge 
       \frac{1}{2} \bigl\{ 2a - \widehat{b} - \varepsilon \bigr\} 
       \sum_{i=1}^{n-1} \left( du(X_i) \right)^2 \ge 0;  \\
    &  \gamma - \frac{1}{2}( r \Delta r + \varepsilon )
       \ge \frac{1}{2}\{ 2 \gamma - \widehat{b} - \varepsilon \} > 0;  \\
    &  r\Delta r + \varepsilon \ge \widehat{a} + \varepsilon > 0.
\end{align*}
Therefore, bearing Shwarz inequality in mind, if we take $ \alpha > 0 $ sufficiently small and if $r_1>0$ sufficiently large, we see that for any $ t > s \ge r_1$ the right hand side of $(5)$ is bounded from below by 
\begin{align*}
    c_1 \int_{B(s,t)} r^{\gamma-1} \bigl\{ |\nabla u|^2 + |u|^2 \bigr\} \,dv_g,
\end{align*}
where $c_1=c_1(a,b,n,\lambda ,\gamma, \varepsilon, \alpha )>0$ is a constant depending only on $a,b,n,\lambda ,\gamma$, $\varepsilon$, and $\alpha$; that is, 
\begin{align}
    & \int_{S(t)} r^{ \gamma }
      \left\{ \left( \frac{\partial u}{\partial r} \right)^2 
      + \frac{1}{2}\lambda |u|^2
      + \frac{\gamma - \varepsilon }{2r} \frac{\partial u}{\partial r} u
      + \frac{\alpha }{r}|u|^2 
      \right\} \,dA  \nonumber \\
    & + \int_{S(s)} r^{ \gamma }
      \left\{ \frac{1}{2}|\nabla u|^2 - \frac{1}{2}\lambda |u|^2 
      - \left( \frac{\partial u}{\partial r} \right)^2 
      - \frac{\gamma - \varepsilon }{2r}\frac{\partial u}{\partial r}u 
      - \frac{\alpha }{r}|u|^2 
      \right\} \,dA   \nonumber \\
\ge & c_1 \int_{B(s,t)} r^{\gamma-1} 
      \bigl\{ |\nabla u|^2 + |u|^2 \bigr\} \,dv_g. 
\end{align}
Besides, by Shwarz inequality, for $r \ge r_2 := \max\{ r_1, \frac{(\gamma - \varepsilon)^2}{4\alpha} \}$,
\begin{align*}
   - \left( \frac{\partial u}{\partial r} \right)^2 
   - \frac{\gamma - \varepsilon }{2r}\frac{\partial u}{\partial r}u 
   - \frac{\alpha }{r}|u|^2 
  \le 
   - \frac{1}{r} 
   \left\{ \alpha - \frac{ (\gamma - \varepsilon)^2 }{4r} \right\} |u|^2 
  \le 0,
\end{align*}
and moreover, $(3)$ implies that there exists a divergent sequence $\{t_i\}_{i=1}^{\infty}$ such that 
\begin{align*}
    \lim_{i \to \infty} \int_{S(t_i)} r^{ \gamma }
      \left\{ \left( \frac{\partial u}{\partial r} \right)^2 
      + \frac{1}{2}\lambda |u|^2
      + \frac{\gamma - \varepsilon }{2r} \frac{\partial u}{\partial r} u
      + \frac{\alpha }{r}|u|^2 
      \right\} \,dA = 0.
\end{align*}
Hence, substituting $t=t_i$ in $(7)$ and letting $ i \to \infty$, we get, for $s \ge r_2$, 
\begin{align*}
     \frac{1}{2} \int_{S(s)} r^{\gamma}
     \left\{ |\nabla u|^2 - \lambda |u|^2 \right\} \,dA 
     \ge 
     c_1 \int_{B(s,\infty)} r^{\gamma-1} 
     \left\{ |\nabla u|^2 + |u|^2 \right\} \,dv_g.
\end{align*}
Integrating this inequality with respect to $s$ over $[t,t_1]$ $(r_2 \le t < t_1)$, we have
\begin{align*}
   &  2 c_1 \int^{t_1}_t \,ds \int_{B(s,\infty)} r^{ \gamma - 1 }
      \left\{ |\nabla u|^2 + |u|^2 \right\} \,dv_g \\
 \le 
   &  \int_{B(t,t_1)} r^{\gamma}
      \left\{ |\nabla u|^2 - q|u|^2 \right\} \,dv_g \\
 = &  \left( \int_{S(t_1)} - \int_{S(t)} \right)
      r^{\gamma} \frac{\partial u}{\partial r}u \,dA
      - \gamma \int_{B(t,t_1)} r^{ \gamma - 1 }
      \frac{\partial u}{\partial r}u \,dv_g.
\end{align*}
In the last line, we have used the equation in Proposition $3.1$ with $\rho (r)=0$ and $ \psi = r^{\gamma} $. 
Since our assumption $(3)$ implies that 
\begin{align*}
    \liminf_{t_1\to \infty} \int_{S(t_1)} r^{\gamma}
    \frac{\partial u}{\partial r} u \,dA = 0,
\end{align*}
letting $t_1\to \infty$ and using Fubini's theorem, we have 
\begin{align}
   &  2c_1 \int^{\infty}_t \,ds \int_{B(s,\infty)} r^{\gamma-1}
      \left\{ |\nabla u|^2 + |u|^2 \right\} \,dv_g  \nonumber \\
 = &  2c_1 \int_{B(t,\infty)} (r-t) r^{\gamma-1}
      \left\{ |\nabla u|^2 + |u|^2 \right\} \,dv_g \nonumber \\
      \le 
   &  \int_{S(t)} r^{\gamma}
      \left\{ \left( \frac{\partial u}{\partial r} \right)^2 + |u|^2 
      \right\} \,dA
      + \gamma \int_{B(t,\infty)} r^{\gamma-1}
      \left\{ \left( \frac{\partial u}{\partial r} \right)^2 + |u|^2 \right\}
      \,dv_g < \infty,
\end{align}
where the right hand side of this inequality is finite by $(7)$. 
Hence we see that the desired assertion $(4)$ holds for $m = \gamma$. 

Once again, integrating this inequality $(8)$ with respect to $t$ over 
$[t_1,\infty)~(t_1\ge r_1)$ and using Fubini's theorem, we get
\begin{align*}
   & 2c_1 \int_{B(t,\infty)} (r-t)^2 r^{\gamma - 1} 
     \left\{ |\nabla u|^2 + |u|^ 2 \right\} \,dv_g \\
     \le 
   & \int_{B(t,\infty)} r^{\gamma}
     \left\{ \left( \frac{\partial u}{\partial r} \right)^2 + |u|^2 \right\} 
     \,dv_g
     + \gamma \int_{B(t,\infty)} (r-t) r^{\gamma - 1}
     \left\{ \left( \frac{\partial u}{\partial r} \right) ^2 + |u|^2 \right\} 
     \,dv_g \\
 < & \infty,
\end{align*}
where the right hand side of this inequality is finite by $(8)$. 
Thus, we see that the desired assertion $(4)$ holds for $ m = \gamma + 1 $. 
Repeating the integration with respect to $t$ shows that the assertion $(4)$ is valid for $m = \gamma + 2, \gamma + 3, \cdots $, therefore, for any $m > 0$. 
\end{proof}
%%
%%%%%%   SECTION  5     %%%%%%
%%
\section{Exponential decay}
%%
%%%%%%%%%    PROPOSITION 5.1       %%%%%%%%%
%%
\begin{prop}
Under the assumption of Proposition $4.1$, we have 
\begin{equation*}
    \int_{B(r_0,\infty)} e^{ \eta r } 
    \left\{ |u|^2 + |\nabla u|^2 \right\} \,dv_g < \infty \qquad 
    {\rm for~any}~0 < \eta < \eta_1 (a,b,n) \sqrt{\lambda},
\end{equation*}
where we set
\begin{align*}
   \eta_1 (a,b,n) = 
   \begin{cases}
      2 \sqrt{ \frac{ (n+1)a - \widehat{b} }{ 2 - (n+1)a + \widehat{b} } } 
                    & \text{if} \quad 2 - ( n + 1 )a + \widehat{b} > 0, \\
      \quad \infty  & \text{if} \quad 2 - ( n + 1 )a + \widehat{b} \le 0 .
   \end{cases}
\end{align*}
\end{prop}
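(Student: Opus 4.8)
The plan is to promote the all-order polynomial decay of Proposition $4.1$ to exponential decay by feeding the energy identities of Section $3$ the twisted function $v=\exp(\rho(r))\,u$, where $\rho$ has (essentially constant) slope $\mu:=\eta/2$, so that $|v|^2=e^{\eta r}|u|^2$. With $\rho'=\mu$ and $\rho''=0$ the twisted potential is $q=\mu^2+\lambda-\mu\,\Delta r$, and since $(*_1)$ yields $\widehat{a}/r\le\Delta r\le\widehat{b}/r$ we have $\Delta r\to0$ and $q\to\mu^2+\lambda$ at infinity. A convenient bookkeeping identity is that the $\mu^2$--terms cancel: a direct computation gives $|\nabla v|^2-q|v|^2=e^{2\rho}\{|\nabla u|^2-\lambda|u|^2+2\mu\,u\,\partial_r u+\mu\,\Delta r\,|u|^2\}$, so the twisted energy density is $e^{2\rho}$ times the untwisted one plus explicit first--order cross terms.

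Before taking $\rho$ genuinely linear I would regularize, replacing $\rho$ by a cut--off $\rho_R$ equal to $\mu r$ on $[r_0,R]$, constant on $[R+1,\infty)$, with $0\le\rho_R'\le\mu$. Because $\rho_R$ is bounded, $v_R=e^{\rho_R}u$ inherits from Proposition $4.1$ the finiteness $\int_{B(r_0,\infty)}r^m\{|v_R|^2+|\nabla v_R|^2\}\,dv_g<\infty$ for every $m$, and, $v_R$ differing from $u$ by a constant factor near infinity, hypothesis $(3)$ passes to $v_R$; exactly as in the proof of Proposition $4.1$ this produces a divergent sequence $\{t_i\}$ along which all $S(t_i)$--boundary terms in Proposition $3.2$ and Lemma $3.2$ vanish. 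The target estimate will be made uniform in $R$, and monotone convergence as $R\to\infty$ (with $\rho_R\uparrow\mu r$) will then give the full weight $e^{\eta r}$.

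I would then insert $v_R$ into Proposition $3.2$ and, to generate a relation differentiated in the lower limit, combine it with Lemma $3.2$ and with a small multiple of Lemma $3.1$, just as in Proposition $4.1$. Using $r\Delta r\in[\widehat{a},\widehat{b}]$ and the standard Riccati equation $\partial_r(\Delta r)=-|\nabla dr|^2-\mathrm{Ric}(\nabla r,\nabla r)$, which together with $(*_1)$ and $(*_3)$ forces $r\,\partial_r q\to0$, the bulk integrand collapses to a quadratic form in $(\partial_r v_R,\nabla^T v_R,v_R)$ with leading coefficients $a-\tfrac12(\widehat{b}+\varepsilon)$ per tangential direction, $\mu r$ in the radial direction, and $\tfrac12(\mu^2+\lambda)(\widehat{a}+\varepsilon)/r$ on $|v_R|^2$. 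For $\mu=0$ positivity of this form requires $\varepsilon$ to lie in the window $(-\widehat{a},\,2a-\widehat{b})$, whose length is exactly $(n+1)a-\widehat{b}>0$ by $(*_2)$. Twisting by $\mu$ forces one to complete the square in the cross terms $\partial_r v_R\cdot v_R$ against the radial and potential coefficients, and this consumes from that window a margin of size $2\mu^2/(\mu^2+\lambda)$; hence the whole form can be kept positive precisely when $(n+1)a-\widehat{b}>2\mu^2/(\mu^2+\lambda)$, which rearranges to $\eta<\eta_1(a,b,n)\sqrt{\lambda}$. When $(n+1)a-\widehat{b}\ge2$ the margin $2\mu^2/(\mu^2+\lambda)<2$ never exhausts the window, giving $\eta_1=\infty$.

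The main obstacle is this last sign analysis: obtaining the sharp rate, not merely some exponential rate, requires choosing $\gamma$ and $\varepsilon$ so that all coefficients are simultaneously controlled, completing the square on the indefinite term $\partial_r v_R\cdot v_R$ against the large radial weight $\mu r$ and the potential weight, and absorbing uniformly in $R$ the $o(1)$ errors coming from $b_1(r)\to0$ and from the transition layer $[R,R+1]$. Once the coercive form is secured it yields, after discarding the $S(t_i)$ terms and integrating in the lower limit as in Proposition $4.1$, an $R$--uniform bound on $\int_{B(r_0,\infty)}e^{2\rho_R}\{|u|^2+|\nabla u|^2\}\,dv_g$; letting $R\to\infty$ by monotone convergence then gives $\int_{B(r_0,\infty)}e^{\eta r}\{|u|^2+|\nabla u|^2\}\,dv_g<\infty$ for every $\eta<\eta_1(a,b,n)\sqrt{\lambda}$, as claimed.
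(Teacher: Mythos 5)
Your algebraic skeleton is right: the admissible window for $\varepsilon$ has length $(n+1)a-\widehat{b}$, and the condition $(n+1)a-\widehat{b}>2\mu^2/(\mu^2+\lambda)$ with $\mu=\eta/2$ does rearrange exactly to $\eta<\eta_1(a,b,n)\sqrt{\lambda}$, which matches the paper's rate (there it appears as the $\theta\to 0$ limit of $2\overline{c}_6$). But the route you propose has a genuine gap at its central point: the cutoff $\rho_R$. The whole difficulty of this proposition is that one may not insert a genuinely exponential weight into the identities of Section 3 before knowing the exponentially weighted integrals are finite, and your regularization does not resolve this. In the transition layer $[R,R+1]$ the derivatives $\rho_R''$, $\rho_R'''$ are of order $\mu$, and the bulk term $\tfrac12 r^{\gamma-1}\bigl\{r\,\partial_r q+q(r\Delta r+\varepsilon)\bigr\}|v_R|^2$ therefore acquires sign-indefinite contributions of size roughly $R\,e^{2\mu R}\int_{B(R,R+1)}\{|u|^2+|\nabla u|^2\}$. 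These are not $o(1)$: Proposition 4.1 gives only super-polynomial decay of $u$, which cannot beat the factor $e^{2\mu R}$, and moving them to the other side yields an inequality of the shape $c\,h(R)\le C+C'R\bigl(h(R+1)-h(R)\bigr)$ for $h(R)=\int_{B(r_0,R)}e^{2\mu r}\{|u|^2+|\nabla u|^2\}\,dv_g$, which is satisfied by exponentially growing $h$ and hence gives no $R$-uniform bound. (The Agmon-type absorption of such layer terms works below the essential spectrum, where $-\Delta-\lambda$ is coercive at infinity; here $\lambda>0$ is embedded and no such coercivity is available.)

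The paper circumvents exactly this obstruction by never using an exponential weight at all: it takes $\rho(r)=m\log r$, so that for every fixed $m$ all integrals converge by Proposition 4.1 and the boundary terms at infinity vanish along a sequence; it then derives, for all large $m$ and all $x$, a differential inequality in the inner radius $x$ for the quantity $F(x)=x^{1-2m}\int_{S(x)}|v|^2\,dA=x\int_{S(x)}|u|^2\,dA$, which is independent of $m$. Only at that stage is $m$ chosen proportional to $x$, namely $m/x=\overline{c}_6$ killing the quadratic bracket, which turns the inequality into $F'(x)+2(1-\theta)\overline{c}_6F(x)\le 0$ and hence gives pointwise exponential decay of the surface integrals; integrating these yields $(22)$, and the gradient bound then follows from Proposition 3.1 with $\psi=e^{\eta r}$. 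To repair your argument you would need either this $m\propto x$ device or some equivalent bootstrap from super-polynomial to exponential decay; as written, the step ``absorbing uniformly in $R$ the errors from the transition layer'' is precisely the assertion that needs proof and, in this form, it fails.
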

\begin{proof}
In Proposition $3.2$, let 
\begin{align*}
   & \rho (r) = m\log r ~~(m \ge \widehat{b}\,); \\
   & \gamma = 1; \\
   & \varepsilon = - ( \widehat{b} - 2a ).
\end{align*}
Then, 
\begin{align}
   & v = r^m u; \nonumber \\
   & q = \frac{m^2}{r^2} + \frac{m}{r^2} - \frac{m}{r}\Delta r + \lambda  
     \ge 
     \lambda 
     + \frac{m^2}{r^2} \left( 1 - \frac{ \widehat{b} - 1 }{m} \right) > 0 ; 
      \nonumber \\
   & r \frac{\partial q}{\partial r}
     = - \frac{2m^2}{r^2} - \frac{2m}{r^2} + \frac{m}{r}\Delta r 
     - m \frac{\partial (\Delta r)}{\partial r}  ; \nonumber \\
   & - \frac{\partial (\Delta r)}{\partial r}
     = |\nabla dr|^2 + \mathrm{Ric}\,(\nabla r,\nabla r)
 \ge \frac{\widehat{a} a}{r^2} - \frac{ \widehat{b}_1 (r) }{r};\tag{$*_9$}\\
   & r \Delta r + \varepsilon \ge  \widehat{a} + \varepsilon , \nonumber 
\end{align}
where we have used the identity $- \frac{\partial (\Delta r)}{\partial r} = |\nabla dr|^2 + \mathrm{Ric}\,(\nabla r,\nabla r)$ (see \cite{K3}, Proposition $2.3$). 
Hence, we have 
\begin{align*}
     & r \frac{\partial q}{\partial r} + q( r \Delta r + \varepsilon ) \\
 \ge & - \frac{2m^2}{r^2} - \frac{2m}{r^2} + \frac{m\widehat{a}}{r^2} 
       + \frac{m}{r^2} \widehat{a} a - \frac{m}{r} \widehat{b}_1 (r) 
       + \left( \lambda + \frac{m^2}{r^2} + \frac{m}{r^2} 
       - \frac{m\,\widehat{b}}{r^2} \right)( \widehat{a} + \varepsilon )  \\
  =  & ( \widehat{a} + \varepsilon ) \lambda 
       - \widehat{b}_1 (r) \frac{m}{r} - c_2 \frac{m}{r^2} 
       - c_3 \frac{m^2}{r^2},
\end{align*}
where we set 
\begin{align}
   &  c_2 = 2 - \widehat{a} ( 1 + a ) 
            + ( \widehat{b} - 1 )( \widehat{a} + \varepsilon ) ; \nonumber \\
   &  c_3 = 2 - \widehat{a} - \varepsilon  .
\end{align}
Note that on the Euclidean end ${\bf R}^n - B_{{\bf R}^n}(0,1)$, $a=b=1$, and hence, $c_2 = c_3 = 0$. 

Besides, 
\begin{align}
   & \widehat{a} + \varepsilon = \widehat{a} + 2a - \widehat{b} 
     = ( n - 1 )\left\{ \frac{n+1}{n-1} a - b \right\} > 0; \\
   & r ( \nabla dr )(\nabla v, \nabla v) 
     - \frac{1}{2} ( r\Delta r + \varepsilon ) \sum_{i=1}^{n-1} (dv(X_i))^2 
     \nonumber \\
   & \hspace{50mm} \ge  
     \frac{1}{2} \left( 2a - \widehat{b} - \varepsilon \right) 
     \sum_{i=1}^{n-1} (dv(X_i))^2 = 0; \nonumber \\
   & 1 - \frac{1}{2} ( r \Delta r + \varepsilon ) + 2m 
     \ge 1 - \frac{1}{2} ( \widehat{b} + \varepsilon ) + 2m  
     = 2m + 1 - a > 0 \nonumber 
\end{align}
for $m > m_0 := \frac{a-1}{2}$. 
Hence, we have 
\begin{align}
  &  \int_{S(t)} r 
     \left\{ \left( \frac{\partial v}{\partial r} \right)^2 
     + \frac{1}{2}q|v|^2 
     + \frac{ 1 - \varepsilon }{2r} \frac{\partial v}{\partial r}v 
     \right\} \,dA 
     + \frac{1}{2} \int_{S(s)} r 
     \bigl\{ |\nabla v|^2 - q|v|^2 \bigr\} \,dA  \nonumber \\
  &  - \int_{S(s)} r 
     \left\{ 
     \left( \frac{\partial v}{\partial r} \right)^2 
     + \frac{ 1 - \varepsilon }{2r} \frac{\partial v}{\partial r}v 
     \right\} \,dA  \nonumber \\
  \ge \, 
  &  \frac{1}{2} \int_{B(s,t)} 
     \left\{ ( \widehat{a} + \varepsilon ) \lambda 
     - \widehat{b}_1 (r) \frac{m}{r} 
     - \left( c_3 + \frac{c_2}{m} \right) 
     \frac{m^2}{r^2} 
     \right\} 
     |v|^2 \,dv_g \nonumber \\
  &  + ( 2m + 1 - a ) \int_{B(s,t)}  
     \left( \frac{\partial v}{\partial r} \right)^2 \,dv_g 
     + ( 1 - \varepsilon ) m \int_{B(s,t)}  
     \frac{1}{r} \frac{\partial v}{\partial r}v \,dv_g
\end{align}
for $m > m_0$. 
On the other hand, Lemma $3.1$ with $ \beta = 0 $ yields that
\begin{align}
      \left( \int_{S(t)} - \int_{S(s)} \right) |v|^2 \,dA
  = & \int_{B(s,t)}  
      \left\{ \left( \Delta r \right) |v|^2
      + 2v\frac{\partial v}{\partial r} \right\} \,dv_g \nonumber \\
  \ge 
    & \int_{B(s,t)}  
      \left\{ \frac{ \widehat{a} }{r} |v|^2
      + 2v\frac{\partial v}{\partial r} \right\} \,dv_g 
\end{align}
Multiplying the inequality $(12)$ by a constant $\alpha > 0$ and addition of it to $(11)$ make 
\begin{align}
  &  \int_{S(t)} r \left\{ \left( \frac{\partial v}{\partial r} \right)^2 
     + \frac{1}{2}q|v|^2 
     + \frac{ 1 - \varepsilon }{2r} \frac{\partial v}{\partial r}v 
     + \frac{\alpha}{r}|v|^2 \right\} \,dA  \nonumber \\
  &  + \frac{1}{2} \int_{S(s)} r 
     \bigl\{ |\nabla v|^2 - q|v|^2 \bigr\} \,dA
     - \int_{S(s)} r 
     \left\{ \left( \frac{\partial v}{\partial r} \right)^2 
     + \frac{ 1 - \varepsilon }{2r} \frac{\partial v}{\partial r}v 
     + \frac{ \alpha }{r}|v|^2  
     \right\} \,dA  \nonumber \\
  \ge \, 
  &  \frac{1}{2} \int_{B(s,t)} 
     \left\{ ( \widehat{a} + \varepsilon ) \lambda  
     + \frac{ 2 \alpha \widehat{a} }{r} - \widehat{b}_1 (r) \frac{m}{r} 
     - \left( c_3 + \frac{c_2}{m} \right) 
     \frac{m^2}{r^2} 
     \right\} |v|^2 \,dv_g  \nonumber \\
  &  + ( 2m + 1 - a ) \int_{B(s,t)} 
     \left( \frac{\partial v}{\partial r} \right)^2 \,dv_g 
     + \int_{B(s,t)} 
     \left\{ 2 \alpha + \frac{ ( 1 - \varepsilon ) m }{r} \right\}
     \frac{\partial v}{\partial r} v \,dv_g 
\end{align}
for $m \ge m_0$. 
Substituting the inequality
\begin{align*}
   &  \left\{ 2 \alpha + \frac{ ( 1 - \varepsilon ) m }{r} \right\} 
      \frac{\partial v}{\partial r} v  \\
   \ge 
   &  - ( 2m + 1 - a ) \left( \frac{\partial v}{\partial r} \right) ^2 
      - \frac{1}{ 4( 2m + 1 - a ) } 
      \left\{ 2 \alpha + \frac{ ( 1- \varepsilon ) m }{r} \right\}^2 |v|^2 ,
\end{align*}
into $(13)$, we get 
\begin{align}
 & \int_{S(t)} r 
   \left\{ \left( \frac{\partial v}{\partial r} \right)^2 + \frac{1}{2}q|v|^2 
   + \frac{ 1 - \varepsilon }{2r} \frac{\partial v}{\partial r}v
   + \frac{\alpha}{r}|v|^2 
   \right\} \,dA  \nonumber \\
 & + \frac{1}{2} \int_{S(s)} r \bigl\{ |\nabla v|^2 - q|v|^2 \bigr\} \,dA
   - \int_{S(s)} r \left\{ \left( \frac{\partial v}{\partial r} \right)^2 
   + \frac{ 1 - \varepsilon }{2r} \frac{\partial v}{\partial r}v 
   + \frac{ \alpha }{r}|v|^2 \right\} \,dA  \nonumber \\
  \ge \, 
 & \int_{B(s,t)} H( \alpha, r, m ) \,|v|^2 \,dv_g ,
\end{align}
where we set 
\begin{align*}
  & H( \alpha, r, m ) \nonumber \\
  = 
  & ( \widehat{a} + \varepsilon ) \lambda  
    - \frac{1}{ 4( 2m + 1 - a ) } 
    \left\{ 2 \alpha + \frac{ ( 1- \varepsilon ) m }{r} \right\}^2 
    + \frac{ 2 \alpha \widehat{a} }{r} - \widehat{b}_1 (r) \frac{m}{r} 
    - \left( c_3 + \frac{c_2}{m} \right) \frac{m^2}{r^2} \nonumber \\
  = 
  & ( \widehat{a} + \varepsilon ) \lambda 
    - \frac{ \alpha^2 }{ 2m + 1 -a } 
    + \left\{ 2 \widehat{a} 
    - \frac{ 1 - \varepsilon }{ 2 + \frac{ 1 - a }{m} } \right\} 
    \frac{\alpha}{r} - \widehat{b}_1 (r) \frac{m}{r} 
    - \left( c_3 + \frac{c_4}{m} \right) \frac{m^2}{r^2}
\end{align*}
and 
\begin{align*}
   c_4 
   = c_2 + \frac{ ( 1 - \varepsilon )^2 }{4( 2 + \frac{ 1 - a }{m})} 
   \le 
   c_2 + \frac{ ( 1 - \varepsilon )^2 }{8}
\end{align*}
Therefore, for any $\theta \in (0,1)$, if we take $\alpha _1 = \alpha _1( \lambda, a,b,n,r_0,\theta ) >0 $ sufficiently small, then for $0 < \alpha < \alpha _1 $, the right hand side of $(14)$ is bounded from below by 
\begin{align*}
  \int_{B(s,t)}  
  \left\{ ( 1 - \theta ) ( \widehat{a} + \varepsilon ) \lambda 
  - \widehat{b}_1 (r) \frac{m}{r} 
  - \left( c_3 + \frac{c_4}{m} \right)  \frac{m^2}{r^2}
  \right\}|v|^2 \,dv_g.
\end{align*}
Our taking $\lim_{r \to \infty}q = \lambda$ into account, Proposition $4.1$ implies that $|\nabla v|$ and $v$ are in $L^2\bigl( B(r_0,\infty),dv_g \bigr)$, and hence, 
\begin{align*}
  \liminf_{t\to \infty} \int_{S(t)} r 
  \left\{ \left( \frac{\partial v}{\partial r} \right)^2 
  + \frac{ 1 - \varepsilon }{2r} \frac{\partial v}{\partial r}v 
  + \frac{1}{2} q |v|^2 + \frac{\alpha}{r}|v|^2 \right\} \,dA = 0.
\end{align*}
Therefore, substituting appropriate divergent sequence $\{t_i\}$ for $t$ in $(14)$, and letting $t_i\to \infty$, we get
\begin{align}
 & \int_{S(s)} r \left\{ |\nabla v|^2 - q|v|^2 \right\} \,dA
   - 2 \int_{S(s)} r \left\{ \left( \frac{\partial v}{\partial r} \right)^2 
   + \frac{ 1 - \varepsilon }{2r} \frac{\partial v}{\partial r}v
   + \frac{\alpha }{r} |v|^2 \right\} \,dA  \nonumber \\
 \ge 
 & 2 \int_{B(s,\infty)} 
   \left\{ ( 1 - \theta ) ( \widehat{a} + \varepsilon ) \lambda 
   - \widehat{b}_1 (r) \frac{m}{r} 
   - \left( c_3 + \frac{c_4}{m} \right) \frac{m^2}{r^2}
   \right\}|v|^2 \,dv_g.
\end{align}
If our end $E$ equals a Euclidean end ${\bf R}^n - B_{{\bf R}^n}(0,r_1)$, then $c_3=0$. 
However, in our general situation, the sign of this constant $c_3$ may be negative. Hence, we shall set 
\begin{align}
   \overline{c}_3 = \max\{ c_3, \theta \} ; \quad 
   \overline{c}_4 = \max\{ c_4, 1 \}.  
\end{align}
Then, for $ m \ge m_1(c_3,c_4,\theta ) ~(~> \max \{\widehat{b} + 1 , m_0\})$, 
\begin{align*}
   \overline{c}_3 + \frac{\overline{c}_4}{m} 
   \le ( 1 + \theta ) \overline{c}_3
\end{align*}
Multiplying both side of $(15)$ by $ s^{ - 2m } $ and integrating it with respect to $s$ over $[x,\infty)$ $(x>r_0)$, we have
\begin{align}
 &  \int_{B(x,\infty)} r^{ 1 - 2m } 
    \left\{ |\nabla v|^2 - q|v|^2 \right\} \,dv_g  \nonumber \\
 &  - 2 \int_{B(x,\infty)} r^{ 1 - 2m }
    \left\{ \left( \frac{\partial v}{\partial r} \right)^2 
    + \frac{ 1 - \varepsilon }{2r} \frac{\partial v}{\partial r}v 
    + \frac{\alpha }{r}|v|^2 \right\} \,dv_g \nonumber \\ 
  \ge 
 &  2 \int_{x}^{\infty} s^{ - 2m } \,ds 
    \int_{B(s,\infty)} 
    \left\{ ( 1 - \theta ) ( \,\widehat{a} + \varepsilon ) \lambda 
    - \widehat{b}_1 (r) \frac{m}{r} 
    - ( 1 + \theta ) \overline{c}_3 \frac{m^2}{r^2} \right\}
    |v|^2 \,dv_g  \nonumber \\
  \ge 
 &  2 \int_{x}^{\infty}
    \left\{ ( 1 - \theta ) ( \,\widehat{a} + \varepsilon ) \lambda 
    - \widehat{b}_1 (s) \frac{m}{s} 
    - ( 1 + \theta ) \overline{c}_3 \frac{m^2}{s^2} \right\} 
    s^{-2m} \,ds 
    \int_{B(s,\infty)}|v|^2 \,dv_g \nonumber \\
  \ge 
 &  2 \left\{ ( 1 - \theta ) ( \,\widehat{a} + \varepsilon ) \lambda 
    - \widehat{b}_1 (x) \frac{m}{x} 
    - ( 1 + \theta ) \overline{c}_3 \frac{m^2}{x^2} \right\} 
    \int_{x}^{\infty} s^{-2m} \,ds
    \int_{B(s,\infty)} |v|^2 \,dv_g
\end{align}
for $m \ge m_1$. 
Substitution of the equation in Lemma $3.2$ into $(17)$ makes 
\begin{align*}
 & - \frac{1}{2} \frac{d}{dx} \left( x^{ 1 - 2m } \int_{S(x)}|v|^2 
    \,dA\right)
    - \frac{1}{2} \int_{S(x)} r^{-2m} 
    \left\{ 2m - 1 - r \Delta r \right\} |v|^2 \,dA   \nonumber \\
 &  - \int_{B(x,\infty)} r^{1-2m} 
    \left\{ 2 \left( \frac{\partial v}{\partial r} \right)^2 
    + \frac{ 2 - \varepsilon }{r} \frac{\partial v}{\partial r}v 
    + 2 \frac{\alpha }{r}|v|^2 \right\} \,dv_g   \nonumber \\
 \ge 
 &  2 \left\{ ( 1 - \theta ) ( \,\widehat{a} + \varepsilon ) \lambda 
    - \widehat{b}_1 (x) \frac{m}{x} 
    - ( 1 + \theta ) \overline{c}_3 \frac{m^2}{x^2} \right\} 
    \int_{x}^{\infty} s^{-2m} \,ds \int_{B(s,\infty)} |v|^2 \,dv_g.
\end{align*}
Taking the following three inequalities into account 
\begin{align*}
 & 2 \left( \frac{\partial v}{\partial r} \right)^2 + 
   \frac{ 2 - \varepsilon }{r} \frac{\partial v}{\partial r} v 
   + 2 \frac{\alpha }{r} |v|^2 
 \ge  
   \frac{1}{r} 
   \left\{ 2 \alpha - \frac{ ( 2 - \varepsilon )^2 }{8r} \right\} |v|^2
   \ge 0 \\
 & \hspace{70mm} 
   {\rm for}~~r > r_1 = \frac{( 2 - \varepsilon )^2}{16 \alpha} ; \\
 & 2m - 1 - r \Delta r \ge 2m - \widehat{b} - 1 
   \ge 2 ( 1 - \theta ) m 
   \qquad {\rm for}~~m \ge m_2=\frac{ \widehat{b} + 1 }{\theta}; \\
 & \theta > \widehat{b}_1 (r) \,(>0) 
   \qquad {\rm for}~~r \ge r_2 = r_2(b_1,\theta), 
\end{align*}
we have for any $ m > m_3 = \max\{ m_1, m_2 \} $ and $ x > r_3 := \max\{ r_1, r_2 \} $
\begin{align}
  & - \frac{1}{2} \frac{d}{dx} \left (x^{1-2m} \int_{S(x)} |v|^2 \,dA \right)
    - \frac{ ( 1 - \theta ) m }{x} \left( x^{ 1 - 2m } \int_{S(x)} |v|^2 
    \,dA \right) \nonumber \\
 \ge 
  & 2 \left\{ ( 1 - \theta ) ( \widehat{a} + \varepsilon ) \lambda 
    - \theta \frac{m}{x}
    - ( 1 + \theta ) \overline{c}_3 \frac{m^2}{x^2} \right\} 
    \int_{x}^{\infty} s^{-2m} \,ds \int_{B(s,\infty)} |v|^2 \,dv_g .
\end{align}
Now, for $ m > m_3 $ and $ x > r_3 $, we shall set 
\begin{align}
   \frac{m}{x} 
 = \frac{ - \theta + \sqrt{ \theta ^2 + 4 ( 1 + \theta ) \overline{c}_3 ( 1 - \theta ) ( \widehat{a} + \varepsilon ) \lambda } }{ 2 ( 1 + \theta ) \overline{c}_3 } =: \overline{c}_6 >0
\end{align}
and 
\begin{align*} 
   F(x) = x^{1-2m} \int_{S(x)} |v|^2 \, dA
        = x \int_{S(x)} |u|^2 \,dA .
\end{align*}
Then, 
\begin{align*}
   ( 1 - \theta ) ( \widehat{a} + \varepsilon ) \lambda 
   - \theta \,\overline{c}_6 
   - ( 1 + \theta ) \overline{c}_3 \,\overline{c}_6^2 = 0,
\end{align*}
and hence, the inequality $(18)$ implies that 
\begin{align} 
    F'(x) + 2 ( 1 - \theta ) \overline{c}_6 F(x)\le 0 
    \qquad {\rm for}~~x \ge r_3.
\end{align}
If we set $ G(x) = e^{ 2 ( 1 - \theta ) \overline{c}_6 \,x} F(x) $, $(20)$ reduces to 
\begin{align*}
   G(x)' \le 0  \qquad {\rm for}~~x \ge r_3.
\end{align*}
Thus, 
$ G(x) \le G(r_2)$ for $x \ge r_3$, that is, 
\begin{align} 
   F(x) = x \int_{S(x)} |u|^2 \,dA 
   \le 
   e^{ - 2 ( 1 - \theta ) \overline{c}_6 x } F(r_2) e^{ \overline{c}_6 r_2 }
   \qquad {\rm for}~~x \ge r_3 .
\end{align}
Now, in view of $(9)$, $(10)$, $(16)$, and $(19)$, we see that $(21)$ implies that 
\begin{align}
   \int_{B(r_0,\infty)} e^{ \eta r } |u|^2 \,dv_g < \infty 
   \qquad {\rm for~any}~~0 < \eta < \eta_1 (a,b,n) \sqrt{\lambda}.
\end{align}

Next, we shall show that Proposition $3.1$ and $(22)$ yield 
\begin{align*}
   \int_{B(r_0,\infty)} e^{ \eta r } |\nabla u|^2 \,dv_g < \infty 
   \qquad {\rm for~any}~~0 < \eta < \eta_1 (a,b,n) \sqrt{\lambda}.
\end{align*}
For that purpose, first consider the integral
\begin{align*}
    g(R)
    = 2 \int_{B(r_0,R)} e^{ \eta r } u \frac{\partial u}{\partial r} \,dv_g .
\end{align*} 
Then, Green's formula yields
\begin{align*}
      g(R)
  = & \frac{1}{\eta} \int_{B(r_0,R)}
      \left\langle 
      \nabla \left( e^{ \eta r } \right),\nabla \left( u^2 \right) 
      \right\rangle \,dv_g \\
  = & \frac{1}{\eta} \left( \int_{S(R)} - \int_{S(r_0)} \right)
      e^{ \eta r} |u|^2 \,dA 
      - \int_{B(r_0,R)} ( \Delta r + \eta ) e^{ \eta r }|u|^2 \,dv_g.
\end{align*}
Since $\lim_{r \to \infty } \Delta r = 0$, $(22)$ implies the existence of the limit, $\lim_{ R \to \infty }g(R)$. 
In particular, 
\begin{equation}
   \liminf_{R\to \infty } e^{ \eta R}
   \left| \int_{S(R)} u \frac{\partial u}{\partial r} \,dA \right| = 0.
\end{equation}
In Proposition $3.1$, we put $\rho = 0$ and $\psi = e^{ \eta r}$. 
Then $v = u$, $ q = \lambda $, and 
\begin{align*}
    &  \int_{B(r_0,R)} \left\{ |\nabla u|^2 - \lambda |u|^2 \right\} 
       e^{ \eta r} \,dv_g \\
 =  &  \left( \int_{S(R)} - \int_{S(r_0)} \right)
       \frac{\partial u}{\partial r} u e^{ \eta r} \,dA
       -  \eta \int_{B(r_0,R)}
       e^{ \eta r} \frac{\partial u}{\partial r} u \,dv_g \\
\le & \left( \int_{S(R)} - \int_{S(r_0)} \right)
      \frac{\partial u}{\partial r} u e^{ \eta r} \,dA
      + \frac{1}{2} \int_{B(r_0,R)}
      e^{ \eta r} \left\{ |\nabla u|^2 +  \eta ^2 |u|^2 \right\} \,dv_g.
\end{align*}
Hence,
\begin{align*}
     & \frac{1}{2} \int_{B(r_0,R)} e^{ \eta r} |\nabla u|^2 \,dv_g \\
 \le & \left( \int_{S(R)} - \int_{S(r_0)} \right)
       \frac{\partial u}{\partial r} u e^{ \eta r} \,dA
       + \int_{B(r_0,R)}
       \left\{ \frac{ \eta ^2}{2} + \lambda \right\} e^{ \eta r} |u|^2 \,dv_g.
\end{align*}
Therefore, $(22)$ and $(23)$ imply that
\begin{equation}
     \int_{B(r_0,\infty)} e^{ \eta r} |\nabla u|^2 \,dv_g < \infty 
     \qquad {\rm for~any}~~0 < \eta < \eta_1 (a,b,n) \sqrt{\lambda} . 
\end{equation}
Thus, from $(22)$ and $(24)$, we get our desired result. 
\end{proof}
%%
%%%%%%   SECTION  6     %%%%%%
%%
\section{Vanishing on some neighborhood of infinity}
%%%
%%%%%%%%%     PROPOSITION  6.1     %%%%%%%
%%%
\begin{prop}
Under the assumptions of Proposition $3.1$, $u\equiv 0$ on $B(r_0,\infty)$. 
\end{prop}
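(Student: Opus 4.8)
The plan is to prove the statement in two stages: first show that $u$ vanishes on a neighborhood of infinity $B(R,\infty)$, and then propagate this back to all of $B(r_0,\infty)$. For the second stage I would invoke unique continuation: $u$ solves the second-order elliptic equation $\Delta u+\lambda u=0$ with smooth coefficients on the \emph{connected} open set $B(r_0,\infty)$ (connected because $\partial U$ is connected and $E$ has radial coordinates, so $B(r_0,\infty)\cong\partial U\times(r_0,\infty)$), and hence the weak unique continuation property for such operators (Aronszajn) forces $u\equiv0$ throughout once $u$ vanishes on any open subset. So everything reduces to the vanishing near infinity, and the standing hypotheses let me take as input the polynomial decay of Proposition $4.1$ and the exponential decay of Proposition $5.1$.

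For the vanishing, I would push the exponential-decay machinery to an arbitrary rate by iterating with a genuinely exponential weight. Concretely, for a parameter $\mu>0$ put $\rho(r)=\tfrac{\mu}{2}r$, so that $v=e^{\mu r/2}u$ and $q=\tfrac{\mu^2}{4}-\tfrac{\mu}{2}\Delta r+\lambda$, and feed this into Proposition $3.2$ with $\gamma=1$ and the same choice $\varepsilon=-(\widehat b-2a)$ as in Proposition $5.1$; condition $(*_2)$ again gives $\widehat a+\varepsilon>0$. The decisive new feature is that the good zeroth-order term, using $(*_9)$, now reads
\[
  r\frac{\partial q}{\partial r}+q(r\Delta r+\varepsilon)\ \ge\ (\widehat a+\varepsilon)\Big(\tfrac{\mu^2}{4}+\lambda\Big)-O\!\Big(\tfrac{\mu\,\widehat b_1(r)}{r}+\tfrac{\mu}{r}\Big),
\]
so it grows \emph{quadratically} in $\mu$, while the coefficient $1-\tfrac12(r\Delta r+\varepsilon)+2r\rho'=1-\tfrac12(r\Delta r+\varepsilon)+\mu r$ of $(\partial_r v)^2$ grows linearly and the cross term $\tfrac{(1-\varepsilon)\mu}{2}(\partial_r v)v$ can be absorbed against it by Young's inequality at the cost of only an $O(\mu/r)$ penalty on $|v|^2$. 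Mirroring the manipulation of $(11)$--$(20)$ in Proposition $5.1$ (combining with Lemma $3.1$ at $\beta=0$ and Lemma $3.2$) I expect to reach a first-order differential inequality $F_\mu'(x)+\kappa(\mu)F_\mu(x)\le0$ for $F_\mu(x)=x\,e^{\mu x}\int_{S(x)}|u|^2\,dA$, valid on $[R,\infty)$ with a threshold $R$ independent of $\mu$ (the error terms are dominated once $r\gtrsim1/\mu$, so the threshold in fact improves as $\mu$ grows) and with decay rate $\kappa(\mu)$ of order $\mu$.

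Because the boundary terms at infinity in these identities are legitimate only when $v$ and $\nabla v$ lie in $L^2$, the argument must proceed by bootstrap. Set $\eta^\ast=\sup\{\eta\ge0:\int_{B(r_0,\infty)}e^{\eta r}(|u|^2+|\nabla u|^2)\,dv_g<\infty\}$, which satisfies $\eta^\ast\ge\eta_1\sqrt\lambda>0$ by Proposition $5.1$. For each $\eta<\eta^\ast$ the scheme above (together with the gradient bound extracted from Proposition $3.1$ exactly as at the end of Proposition $5.1$) upgrades $F_\eta(x)\le F_\eta(R)e^{-\kappa(\eta)(x-R)}$, which by the coarea formula $dv_g=dr\,dA$ yields $\int e^{\eta'r}(|u|^2+|\nabla u|^2)\,dv_g<\infty$ for every $\eta'<\eta+\kappa(\eta)$; taking $\eta\nearrow\eta^\ast$ with $\kappa$ bounded below gives $\eta+\kappa(\eta)>\eta^\ast$, contradicting the definition of $\eta^\ast$. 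Hence $\eta^\ast=\infty$, i.e. $u$ decays faster than every exponential. With $\eta^\ast=\infty$ in hand, $v=e^{\mu r/2}u\in L^2$ for \emph{all} $\mu$, so the differential inequality is justified for every $\mu$ and gives
\[
  \int_{S(x)}|u|^2\,dA\ \le\ \frac{R}{x}\Big(\int_{S(R)}|u|^2\,dA\Big)e^{-(\mu+\kappa(\mu))(x-R)}\qquad(x\ge R).
\]
Fixing any $x>R$ and letting $\mu\to\infty$ forces $\int_{S(x)}|u|^2\,dA=0$, hence $u\equiv0$ on $B(R,\infty)$, which is the desired vanishing near infinity.

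The hard part will be exactly the interaction between the bootstrap and the error terms: one must verify that the rate gain $\kappa(\eta)$ stays bounded below (ideally of order $\eta$, yielding a multiplicative improvement) uniformly as $\eta\nearrow\eta^\ast$, so that the iteration genuinely reaches $\eta^\ast=\infty$, and that the threshold $R$ together with all the $O(\mu/r)$ and $\widehat b_1$-type errors remain dominated by the $O(\mu^2)$ gain uniformly in $\mu$. Controlling the growing first-order cross term, whose coefficient $\sim\mu$ must be absorbed into the $(\partial_r v)^2$ term of coefficient $\sim\mu r$ without cancelling the quadratic gain in the potential, is the delicate point. The curvature hypotheses enter precisely here: $(*_2)$ secures the sign $\widehat a+\varepsilon>0$, $(*_1)$ and $(*_3)$ (through $(*_9)$) keep $r\partial_r q$ and $r\Delta r$ under control, and $\lim_{r\to\infty}b_1(r)=0$ makes the error genuinely lower order.
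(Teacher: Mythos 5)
Your overall architecture matches the paper at the two ends: the reduction to ``vanishing on a neighborhood of infinity plus unique continuation on the connected set $B(r_0,\infty)$'' is exactly what the paper does, and your computation that with $\rho(r)=\mu r/2$ the zeroth-order bulk term is bounded below by $(\widehat{a}+\varepsilon)(\mu^2/4+\lambda)$ minus lower-order errors is correct (with a linear exponent $(\rho')^2$ is constant, so the obstruction constant $c_3$ of the paper's Section 5 never even appears). The final step --- once $e^{\mu r/2}u\in L^2$ for \emph{all} $\mu$, let $\mu\to\infty$ in the surface estimate --- would also be sound. The genuine gap is the bootstrap that is supposed to deliver $\eta^\ast=\infty$, and it is not a technicality you have merely deferred: as set up, it does not close.

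When you actually ``mirror $(11)$--$(20)$,'' the decay rate in the final differential inequality is dictated by the logarithmic derivative of the weight you multiply the $(15)$-analogue by before integrating in $s$, not by the size of the positive bulk term. To invoke the Lemma 3.2 analogue you must multiply by $e^{-\mu s}$, so that the resulting weight $re^{-\mu r}$ matches $|v|^2=e^{\mu r}|u|^2$; the surface term $\{2m-1-r\Delta r\}$ of Lemma 3.2 becomes $\{\mu r-1-r\Delta r\}\ge(1-\theta)\mu r$, and the functional obeying a first-order inequality is $G(x)=xe^{-\mu x}\int_{S(x)}|v|^2\,dA=x\int_{S(x)}|u|^2\,dA$ --- not your $F_\mu(x)=xe^{\mu x}\int_{S(x)}|u|^2\,dA$ --- with $G'+(1-\theta)\mu G\le0$. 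The output is therefore $\int_{S(x)}|u|^2\,dA\lesssim e^{-(1-\theta)\mu x}$ with total rate $(1-\theta)\mu<\mu<\eta^\ast$: you recover strictly less integrability than you assumed, so $\kappa(\eta)$ is not bounded below by a positive constant and the contradiction with the definition of $\eta^\ast$ never materializes. The $O(\mu^2)$ bulk term enters only through its sign (it is discarded once nonnegative), so its quadratic growth buys nothing in the rate unless you supply a genuinely new mechanism --- e.g.\ a Mourre/virial-type lower bound tying $\int_{B(s,\infty)}|v|^2\,dv_g$ back to the surface functional --- which your sketch does not identify. The paper sidesteps the entire issue with the sub-exponential weight $\rho(r)=kr^{\theta}$, $\theta\in(0,1)$: since $e^{kr^{\theta}}=O(e^{\delta r})$ for every $\delta>0$, the single exponential rate already furnished by Proposition 5.1 legitimizes the boundary terms at infinity for \emph{every} $k\ge1$ simultaneously, no bootstrap is needed, and vanishing follows from the elementary observation that the surviving surface inequality $(33)$ is a quadratic in $k$ whose leading coefficient $-\theta^2s^{2\theta-2}\int_{S(s)}|u|^2\,dA$ is nonpositive and must therefore vanish. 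You should either adopt that device or prove the rate gain $\kappa(\eta)\ge\kappa_0>0$; as written, the latter is the missing heart of the argument.
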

\begin{proof}
In proposition $3.2$, we shall put 
\begin{align*}
   \gamma  & = 1 ; \\
   \rho(r) & = k r^{\theta }\quad ( k\ge 1,~\theta \in (0,1) )   
\end{align*}
and choose 
$ \varepsilon $ so that $(6)$ holds. 
Then, from
\begin{align}
   &  - \frac{\partial (\Delta r)}{\partial r}
      = |\nabla dr|^2 + \mathrm{Ric}\,(\nabla r,\nabla r)
 \ge  \frac{\widehat{a} a}{r^2} - \frac{ \widehat{b}_1 (r) }{r};\tag{$*_9$} \\
   &  \hspace{10mm} \widehat{a} \le r\Delta r \le \widehat{b}, \nonumber
\end{align}
we get
\begin{align}
 v & = e^{kr^{\theta }} u ;\\
 q & = \lambda - \rho''(r) - \rho'(r) \Delta r + \left( \rho'(r) \right)^2 
       \nonumber \\
   & = \lambda + k \theta ( 1 - \theta ) r^{ \theta - 2 }
       - k \theta r^{\theta -1} \Delta r + k^2 \theta ^2 r^{ 2\theta - 2 } \\
   & \ge 
       \lambda 
       + k \theta \left\{ 1 - \theta - \widehat{b} \right\} r^{ \theta - 2 } 
       + k^2 \theta ^2 r^{ 2\theta - 2 }; \nonumber \\
 r \frac{\partial q}{\partial r} 
   & = - k \theta ( 1 - \theta )( 2 - \theta ) r^{\theta - 2} 
       + k \theta ( 1 - \theta ) r^{\theta -1} \Delta r  \nonumber \\
   & \hspace{3.5mm}  
       - k \theta r^{\theta } \frac{\partial (\Delta r)}{\partial r} 
       - 2 k^2 \theta ^2 ( 1 - \theta ) r^{ 2\theta - 2 }  \nonumber \\
   & \ge 
       - k \theta ( 1 - \theta )( 2 - \theta ) r^{ \theta - 2} 
       + k \theta ( 1 - \theta ) \widehat{a} r^{ \theta - 2}  \nonumber \\
   & \hspace{3.5mm}  
       + k \theta \widehat{a} a r^{ \theta - 2} 
       - k \theta r^{ \theta - 1 } \widehat{b}_1(r) 
       - 2 k^2 \theta ^2 ( 1 - \theta ) r^{ 2 \theta - 2 }  \nonumber \\
   & = - k \theta r^{ \theta - 1 } \widehat{b}_1(r) 
      + k \theta \left\{ 
      - ( 1 - \theta )( 2 - \theta -\widehat{a} ) + \widehat{a}a 
      \right\} r^{ \theta - 2}
       - 2 k^2 \theta ^2 ( 1 - \theta ) r^{ 2 \theta - 2 }.\nonumber 
\end{align}
In addition, 
\begin{align*}
    r \Delta r + \varepsilon \ge \widehat{a} + \varepsilon > 0 .
\end{align*}
Therefore, 
\begin{align}
 & r \frac{\partial q}{\partial r} + q ( r \Delta r + \varepsilon ) 
   \nonumber \\
 \ge 
 & - k \theta r^{ \theta - 1 } \widehat{b}_1(r)
   + k \theta 
   \left\{ - ( 1 - \theta )( 2 - \theta -\widehat{a} ) + \widehat{a}a \right\} 
   r^{ \theta - 2} 
   - 2 k^2 \theta ^2 ( 1 - \theta ) r^{ 2 \theta - 2 } \nonumber \\
 & + \left\{ \lambda 
   + k \theta ( 1 - \theta - \widehat{b} ) r^{ \theta - 2 } 
   + k^2 \theta ^2 r^{ 2\theta - 2 } \right\} 
   \left( \widehat{a} + \varepsilon \right) \nonumber \\
=& \left( \widehat{a} + \varepsilon \right) \lambda 
   - k \theta r^{ \theta - 1 } \widehat{b}_1(r) 
   + k \theta r^{ \theta - 2}
   \left\{ ( 1 - \theta ) ( - 2 + \theta + 2 \widehat{a} + \varepsilon ) 
   + \widehat{a}a - \widehat{b} ( \widehat{a} + \varepsilon ) \right\} 
   \nonumber \\
 & + k^2 \theta ^2 r^{ 2\theta - 2 } 
   \left\{ \widehat{a} + \varepsilon - 2 ( 1 - \theta ) \right\} \nonumber \\
=& \left( \widehat{a} + \varepsilon \right) \lambda 
   - k \theta r^{ \theta - 1 } \widehat{b}_1(r)
   + k \theta r^{ \theta - 2} c_7 + k^2 \theta ^2 r^{ 2\theta - 2 } c_8
\end{align}
and 
\begin{align}
   & 1 - \frac{1}{2} ( r \Delta r + \varepsilon ) + 2 r \rho '(r) 
     \ge
     2 k \theta r^{\theta} + 1 - \frac{1}{2}( \widehat{b} + \varepsilon ) 
     = 2 k \theta r^{\theta} + c_9 ; \\
   & ( 1 - \varepsilon ) k \theta r^{ \theta -1 } 
     \frac{\partial v}{\partial r}v 
     \ge 
     - \left\{ 2 k \theta r^{\theta} + c_9 \right\} 
     \left( \frac{\partial v}{\partial r} \right)^2
     - \frac{ ( 1 - \varepsilon )^2 k^2 \theta^2 r^{ 2\theta - 2 } }
     {2 k \theta r^{\theta} + c_9 } |v|^2 ,
\end{align}
where we set 
\begin{align*}
  c_7 & = ( 1 - \theta ) ( - 2 + \theta + 2 \widehat{a} + \varepsilon ) 
        + \widehat{a}a - \widehat{b} ( \widehat{a} + \varepsilon );\\
  c_8 & = \widehat{a} + \varepsilon - 2 ( 1 - \theta ); \\
  c_9 & = 1 - \frac{1}{2}( \widehat{b} + \varepsilon )
\end{align*}
for simplicity. 
Since $\widehat{a} - \varepsilon > 0$, we can choose $ \theta \in (0,1) $ so that 
\begin{align}
     c_8 = \widehat{a} - \varepsilon - 2( 1 - \theta ) > 0 . 
\end{align}
Then, from $(27)$, $(28)$, and $(29)$, we see that 
\begin{align}
  &  \frac{1}{2} \left\{ r \frac{\partial q}{\partial r} 
     + q ( r \Delta + \varepsilon ) \right\} |v|^2
     + ( 1 - \varepsilon ) \rho '(r) \frac{\partial v}{\partial r} v  
     \nonumber \\
  &  \hspace{30mm} + \left\{ 1 - \frac{1}{2} ( r \Delta r + \varepsilon ) 
     + 2 r \rho '(r) 
     \right\} \left( \frac{\partial v}{\partial r} \right)^2  \nonumber \\
 \ge 
  &  \frac{1}{2} \left\{ ( \widehat{a} + \varepsilon ) \lambda 
     - k \theta r^{ \theta - 1 } \widehat{b}_1(r) 
     + k^2 \theta ^2 r^{ 2\theta - 2 }
     \left( c_8 + \frac{c_7}{k\theta r^{\theta}} 
     - \frac{ ( 1 - \varepsilon )^2 }{ 2( 2k\theta r^{\theta } + c_9 ) } 
     \right) \right\} |v|^2.
\end{align}
Thus, in view of $(30)$, there exists a constant $ r_6(a,b,\varepsilon, \theta) > r_0 $ depending only on $a,b,\varepsilon$, and $\theta$ such that 
\begin{align*}
   c_8 + \frac{c_7}{k\theta r^{\theta}} 
   - \frac{ ( 1 - \varepsilon )^2 }{ 2( 2k\theta r^{\theta } + c_9 ) } 
   \ge \frac{ c_8 }{2} := c_9
\end{align*}
for any $k\ge 1$ and $r\ge r_6$. 
Since $\lim_{r \to \infty} b_1(r) = 0$, we have
\begin{align*}
   ( \widehat{a} + \varepsilon ) \lambda - y \widehat{b}_1(r) 
   + y^2 c_9  
  = c_9 \left( y - \frac{\widehat{b}_1(r)}{2c_9} \right)^2 
   + \frac{ 4 ( \widehat{a} + \varepsilon ) \lambda c_9 - \widehat{b}_1(r) ^2 }
   {4c_9} > 0
\end{align*}
for any $r \ge r_7(a,\lambda, \varepsilon)$ and $y \in {\bf R}$. 
Therefore, the right hand side of $(31)$ is nonnegative for any $k\ge 1$ and $r\ge r_8:=\max\{ r_6, r_7 \}$ 
Thus, we have for any $k\ge 1$ and $t > s \ge r_8$
\begin{align}
  & \int_{S(t)} r \left\{ 
    \left( \frac{\partial v}{\partial r} \right)^2 + \frac{1}{2}q|v|^2 
    - \frac{1}{2}|\nabla v|^2 
    + \frac{ 1 - \varepsilon }{2r}
    \frac{\partial v}{\partial r}v \right\} \,dA \nonumber \\
  & + \int_{S(s)} r \left\{ \frac{1}{2}|\nabla v|^2
    - \frac{1}{2}q|v|^2 - \left(\frac{\partial v}{\partial r}\right)^2
    - \frac{ 1 - \varepsilon }{2r}\frac{\partial v}{\partial r}v \right\} 
    \,dA
  \ge 0.
\end{align}
By bearing $(25)$ in mind, we see that Proposition $5.1$ implies that 
\begin{align*}
    \liminf_{t\to \infty} \int_{S(t)} r 
    \left\{ |\nabla v|^2 + |v|^2 \right\} \,dA = 0.
\end{align*}
Hence, substituting an appropriate divergent sequence $\{t_i\}$ for $t$ in $(32)$, and letting $t_i\to \infty$, we see that 
\begin{align}
    \int_{S(s)} \left\{ - \left( \frac{\partial v}{\partial r} \right)^2 
    + \frac{1}{2} |\nabla v|^2 - \frac{1}{2} q|v|^2 
    - \frac{ 1 - \varepsilon }{2r}\frac{\partial v}{\partial r} v 
    \right\} \,dA \ge 0
\end{align}
for all $ s \ge r_8$ and $ k \ge 1$. 
On account of the facts
\begin{align*}
  &  \left( \frac{\partial v}{\partial r} \right)^2
     = \left\{ k^2 \theta^2 r^{ 2 \theta - 2 } |u|^2
     + 2k \theta r^{\theta -1} \frac{\partial u}{\partial r} u
     + \left( \frac{\partial u}{\partial r} \right)^2 \right\} 
     e^{2kr^{\theta}} , \\
  &  |\nabla v|^2
     = \left\{ k^2 \theta^2 r^{2\theta -2} |u|^2
     + 2k \theta r^{\theta -1} \frac{\partial u}{\partial r}u
     + |\nabla u|^2 \right\} e^{2kr^{\theta}},
\end{align*}
and $(26)$, the left hand side of $(32)$ is written as follows:
\begin{align*}
   \left\{ k^2 I_1(s) + k I_2(s) + I_3(s) \right\} e^{2kr^{\theta}},
\end{align*}
where 
\begin{align*}
   I_1(s) = - \theta^2 s^{ 2 \theta - 2 } \int_{S(s)} |u|^2 \,dA;
\end{align*}
$I_2(s)$ and $I_3(s)$ is independent of $k$. 
Thus, for any fixed $ s\ge r_8$, the inequality $ k^2 I_1(s) + k I_2(s) + I_3(s) \ge 0 $ holds for all $ k \ge 1$. 
Therefore, $ I_1(s) = 0 $ for any fixed $s\ge r_8$, that is, $u \equiv 0$ on $B( r_8, \infty)$. 
The unique continuation theorem implies that $ u \equiv 0$ on $E = M - \overline{U}$. 
\end{proof}
 
We obtain Theorem $1.1$ from this Proposition $6.1$. \vspace{3mm}

\noindent {\it Proof of Theorem $1.2$}

Under the assumptions of Theorem $1.2$, $\lim_{r \to \infty} \Delta = 0$, and hence, $\sigma_{{\rm ess}}( - \Delta ) =[0,\infty)$ (see \cite{K1}). 
If $\lambda >0$ is an eigenvalue of $ - \Delta $ and $u$ is an corresponding eigenfunction, then $u,\nabla u \in L^2(M,dv_g)$, in particular, $u,\nabla u \in L^2(E,dv_g)$. 
However, $(*_5)$ and Theorem $1.1$ implies that $(1)$ with $\gamma =1$ holds:
\begin{align*}
   \liminf_{t \to \infty}~t \int_{S(t)} 
   \left\{ \left( \frac{\partial u}{\partial r} \right)^2 + |u|^2 \right\}
   \,dA \neq 0 
\end{align*}
Therefore, there exist positive constants $c_{10}$ and $r_9$ such that 
\begin{align*}
   t \int_{S(t)} 
   \left\{ \left( \frac{\partial u}{\partial r} \right)^2 + |u|^2 \right\}
   \,dA > c_{10} \qquad {\rm for}~t \ge r_9.
\end{align*}
Hence, dividing the both sides of this inequality by $t$ and integrating it with respect to $t$ over $[r_9,\infty)$, we get
\begin{align*}
   \int_{B(r_9,\infty)} 
   \left\{ \left( \frac{\partial u}{\partial r} \right)^2 + |u|^2 \right\}
   \,dA > \int_{r_9}^{\infty} \frac{c_{10}}{t} = + \infty.
\end{align*}
This contradicts the fact that $u,\nabla u \in L^2(E,dv_g)$. 
Hence, $\sigma_{{\rm p}}( - \Delta ) = \emptyset$. 
Thus, we have proved Theorem $1.2$. \vspace{3mm}

Theorem $1.3$ and $1.4$ are obtained by using the comparison theorem in Riemannian geometry, that is, Proposition $2.1$ and Proposition $2.2$.

%%
%%%%%%    Remark    %%%%%%
%%
\section{Remarks}

In our theorems, we assume that there exists an open subset $U$ of $M$ with compact boundary $\partial U$ such that the outward pointing normal exponential map $ \exp_{\partial U} : N^{+}(\partial U) \to M - \overline{U}$ induces a diffeomorphism. 
This condition is not essential. 
What matters is rather the existence of a function with special properties, such as $r$. 
The readers interested in this matter could pick up necessary conditions that should be satisfied by such a function from our proof above. 
We note that there are Donnelly's works (\cite{D4},\cite{D5}) from this viewpoint of an exhaustion function of $M$. 

Our arguments are also applicable for the case that the metric of an end is a warped product. 
This case is discussed in \cite{K4}.

%%
%%%%%%    Reference    %%%%%%
%%

\vspace{1mm}
\begin{flushleft}
Hironori Kumura\\ 
Department of Mathematics\\ 
Shizuoka University\\ 
Ohya, Shizuoka 422-8529\\ 
Japan\\
E-mail address: smhkumu@ipc.shizuoka.ac.jp
\end{flushleft}


\begin{thebibliography}{20}
%%
\bibitem{A}F.~V.~Atkinson, 
{\it The asymptotic solution of second order differential equations}, 
Ann. Math. Pura. Appl., {\bf 37} (1954), 347--378.
%%
%%
\bibitem{D1}H.~Donnelly, 
{\it Eigenvalues embedded in the continuum for negatively curved manifolds}, 
Michigan Math. J., {\bf 28} (1981), 53--62.
%%
\bibitem{D2}H.~Donnelly, 
{\it Negative curvature and embedded eigenvalues}, Math. Z., {\bf 203} (1990), 
301--308.
%%
\bibitem{D3}H.~Donnelly, 
{\it Embedded eigenvalues for asymptotically flat surfaces}, 
Proceeding Symposia in Pure Mathematics, {\bf 54} (1993), Part 3, 169--177.
%%
\bibitem{D4}H.~Donnelly, 
{\it Exhaustion functions and the spectrum of Riemannian manifolds}, 
Indiana Univ. Math. J. 46 (1997), 505--528.
%%
\bibitem{D5}H.~Donnelly, 
{\it Spectrum of the Laplacian on asymptotically Euclidean spaces}, 
Michigan Math. J. 46 (1999), 101--111.
%%
\bibitem{D-G}H.~Donnelly and N.~Garofalo, 
{\it Riemannian manifolds whose Laplacian have purely continuous spectrum}, 
Math. Ann., {\bf 293} (1992), 143--161.
%%
\bibitem{E}D.~M.~Eidus, {\it The principle of limit amptlitude}, 
Russian Math. Surveys., {\bf 24} (1969), no. 3, 97--167.
%%
\bibitem{E}J.~Escobar, 
{\it On the spectrum of the Laplacian on complete Riemannian manifolds}, 
Comm. Partial Differential Equations, {\bf 11} (1986), 63--85.
%%
\bibitem{E-F}J.~Escobar and A.~Freire, 
{\it The spectrum of the Laplacian of manifolds of 
positive curvature}, 
Duke Math. J., {\bf 65} (1992), 1--21.
%%
\bibitem{G-W}R.~Green and H.~Wu, 
{\it Gap theorems for noncompact Riemannian manifolds}, 
Duke Math. J., {\bf 49} (1981), 731--756.
%%
\bibitem{Karp}L.~Karp, 
{\it Noncompact manifolds with purely continuous spectrum}, Mich. Math. J., {\bf 31} (1984), 339--347.
%%
\bibitem{Kasue}A.~Kasue, 
Applications of Laplacian and Hessian comparison theorems, 
Geometry of geodesics and related topics (Tokyo, 1982), 333--386, Adv. Stud. Pure Math., 3.
%%
\bibitem{Kato}T.~Kato, 
{\it Growth properties of solutions of the reduced wave equation 
with a variable coefficient}, 
Comm. Pure Appl. Math., {\bf 12} (1959), 403--426.
%%
\bibitem{K1}H.~Kumura, 
{\it On the essential spectrum of the Laplacian on complete manifolds}, 
J. Math. Soc. Japan, {\bf 49} (1997), 1--14.
%%
\bibitem{K2}H.~Kumura, {\it A note on the absence of eigenvalues 
on negatively curved manifolds}, 
Kyushu J. Math., {\bf 56} (2002), 109--121.
%%
\bibitem{K3}H.~Kumura, {\it The radial curvature of an end that makes eigenvalues vanish in the essential spectrum I}, 
arXiv:math/0505557, preprint.
%%
\bibitem{K4}H.~Kumura, {\it Perturbation of a warped product metric of an end and the growth property of solutions to eigenvalue equation}, in preparation.
%%
\bibitem{M}K.~Mochizuki, {\it Growth properties of solutions of second 
order elliptic differential equations}, 
J. Math. Kyoto Univ., {\bf 16} (1976), 351--373.
%%
\bibitem{P}M.~A.~Pinsky, 
{\it Spectrum of the Laplacian on a manifold of negative curvature II}, 
J. Differential Geometry, {\bf 14} (1979), 609--620.
%%
\bibitem{R}S.~N.~Roze, 
{\it On the spectrum of an elliptic operator of second order}, 
Math. USSR. Sb., {\bf 9} (1969), 183--197.
%%
\bibitem{T}T.~Tayoshi, 
{\it On the spectrum of the Laplace-Beltrami operator on a non-compact surface}, 
Proc. Japan. Acad., {\bf 47} (1971), 187--189.
%%
\end{thebibliography}
\end{document}